\documentclass[11pt, reqno]{amsart}

\usepackage[T1]{fontenc}
\usepackage[utf8]{inputenc}
\usepackage{lmodern}
\usepackage[a4paper,margin=25mm]{geometry}
\usepackage{amsmath,amssymb,amsthm}
\usepackage{mathtools}
\usepackage{booktabs}
\usepackage[expansion=false]{microtype}
\usepackage[hidelinks]{hyperref}
\usepackage{cite}
\usepackage{graphics}
\usepackage{bm}

\newtheorem{theorem}{Theorem}
\newtheorem{proposition}{Proposition}

\newtheorem{lemma}{Lemma}
\newtheorem{remark}{Remark}
\newtheorem{definition}{Definition}

\newcommand{\diag}{\operatorname{diag}}
\newcommand{\one}{\underline{1}}

\allowdisplaybreaks

\title[Cospectral Graphs with Distinct Lov\'{a}sz Numbers]
{Connected Irregular Cospectral Graphs with Identical Combinatorial Invariants and Distinct Lov\'{a}sz Numbers}

\author[I. Sason]{Igal Sason}

\address{\normalfont \newline 
Igal Sason is with the Viterbi Faculty of Electrical and Computer Engineering and the Faculty
of Mathematics, Technion--Israel Institute of Technology, Haifa 3200003, Israel.
E-mail: \texttt{eeigal@technion.ac.il.}}

\begin{document}

\begin{abstract}
For every integer \(n\geq 11\), we construct a pair of connected,
irregular, nonisomorphic graphs on \(n\) vertices that are cospectral
with respect to the adjacency, Laplacian, signless Laplacian,
normalized Laplacian, and Seidel matrices and have equal independence, clique,
chromatic, complement chromatic, and maximum-cut numbers, but distinct
Lov\'{a}sz \(\vartheta\)-numbers. The graphs are obtained by joining
each member of a fixed pair of cospectral, nonisomorphic regular graphs
on ten vertices, due to van Dam and Haemers~(2003),
with the complete graph \(K_{n-10}\). We prove that this construction
preserves the equality of the five spectra and the stated
integer-valued combinatorial invariants within each pair, while leaving
the respective Lov\'{a}sz numbers unchanged. We derive exact formulas
for these Lov\'{a}sz numbers and prove that they are distinct. We also
determine the cardinality-constrained maximum-cut profiles of the two
base graphs and their complements. These profiles yield exact formulas
and establish equality of the maximum-cut numbers within each pair,
both for the joins of the base graphs with \(K_{n-10}\) and for the
joins of their complements with \(K_{n-10}\). For \(n=10\), we first
present a regular pair satisfying all the stated properties except
irregularity, and then a connected, irregular, nonisomorphic pair that
shares all five spectra and all the stated integer-valued combinatorial
invariants, but has distinct Lov\'{a}sz numbers. An exhaustive SageMath
computation shows that no pair of connected, irregular, nonisomorphic
graphs on at most nine vertices shares the first four spectra and 
all the stated integer-valued combinatorial invariants. Hence, 
ten is the smallest possible order, and pairs with all the stated properties 
exist for every \(n\geq 10\). This extends and strengthens an earlier 
result for \emph{even} \(n \geq 14\) (Sason, 2024), which also did not 
address Seidel matrices, complement chromatic numbers, the maximum-cut numbers of the 
graphs, or the maximum-cut numbers of the corresponding joins formed 
from their complements. Thus, the Lov\'{a}sz number provides an efficiently 
computable certificate of nonisomorphism even when the five spectra and 
all the stated integer-valued combinatorial invariants coincide.
\end{abstract}

\keywords{
Algebraic graph theory; cospectral graphs; graph invariants; graph isomorphism; 
graph joins; Lov\'{a}sz $\vartheta$-function; spectral graph theory. \vspace*{0.1cm}
\newline
\textbf{Mathematics Subject Classification:} 05C50, 05C60, 05C76.}

\maketitle
\thispagestyle{empty}
\setcounter{page}{1}

\vspace*{-0.4cm}
\section{Introduction}

The Lov\'{a}sz $\vartheta$-function connects graph structure, optimization,
and zero-error information theory. It upper-bounds both the independence
number and the Shannon capacity of a finite, simple, and undirected graph, 
lower-bounds the chromatic number 
of the complement, and yields lower bounds on the maximum-cut number of the
complement \cite{BallaJS2024,Knuth94,Lovasz1979,Lovasz19}. Moreover, its
semidefinite-programming formulation permits approximation to within any
additive error $\varepsilon\in(0,1)$ in time polynomial in the graph order
and $\log(1/\varepsilon)$ \cite{Lovasz19,GrotschelLovaszSchrijver1981}. 
This computational accessibility contrasts with the NP-hardness of exactly 
computing the independence, clique, chromatic, and maximum-cut numbers. 
The broader role of semidefinite programming in graph theory is illustrated 
by Ba\v{c}\'{i}k and Mahajan \cite{BacikMahajan1995}, who relate the
Lov\'{a}sz number to semidefinite relaxations of graph-homomorphism
problems. As the constructions in this paper demonstrate, the
$\vartheta$-number can also reveal structural distinctions that are
invisible to these familiar combinatorial invariants and to several
standard graph spectra.

From the perspective of graph isomorphism, graph invariants provide
necessary conditions for isomorphism: unequal invariant values certify
nonisomorphism, whereas equal values do not, in general, establish
isomorphism. Spectral graph determination asks when the spectrum of a
matrix associated with a graph determines the graph up to isomorphism.
The surveys of van Dam and Haemers
\cite{vanDamHaemers2003,vanDamHaemers2009} discuss this question for
several graph matrices and review numerous spectral characterizations.
Related methods for constructing irregular, cospectral, nonisomorphic
graphs were developed by Hamud and Berman \cite{HamudB24} and the
references therein. More recently, Sason, Krupnik, Hamud, and Berman
\cite{SasonKrupnikHamudBerman2025} surveyed classical and recent
developments, including methods for constructing and distinguishing
cospectral nonisomorphic graphs and conditions under which graphs are
determined by their spectra. These works provide a broader context for
investigating which structural information is captured by graph spectra
and which additional invariants can distinguish graphs having identical
spectra.

The constructions presented here show that even the combined knowledge
of the adjacency, Laplacian, signless Laplacian, normalized Laplacian, and 
Seidel spectra, together with the independence number, clique number, chromatic
number, chromatic number of the complement, and maximum-cut number, may
fail to distinguish connected nonisomorphic graphs, whereas their
Lov\'{a}sz numbers do distinguish them. Thus, semidefinite invariants can
serve as useful supplementary certificates of nonisomorphism. For the
infinite family constructed in this paper, exact analytical calculations
show that the two Lov\'{a}sz numbers are distinct and that each remains
constant as the graph order varies, even though the five common spectra
and the stated common integer-valued combinatorial graph invariants depend 
on that order. This demonstrates the additional distinguishing
power of the Lov\'{a}sz number in the present setting, without suggesting
that it is a complete isomorphism invariant or that the corresponding
graphs necessarily have distinct Shannon capacities.

Theorem~4.19 of \cite{Sason2024} states that, for every even integer
$n\geq 14$, there exist connected, irregular, nonisomorphic graphs on
$n$ vertices that are cospectral with respect to the adjacency,
Laplacian, signless Laplacian, and normalized Laplacian matrices, have
equal independence, clique, and chromatic numbers, and have distinct
Lov\'{a}sz $\vartheta$-numbers. That construction is based on splitting
joins and produces graphs of order $2k+12$, where $k\geq 1$ is an
integer. It does not address the chromatic numbers of the complements,
the maximum-cut numbers of the graphs, or the maximum-cut numbers of the
corresponding joins formed from their complements.

The main construction of this paper strengthens that existence result
to every integer $n\geq 11$. It removes the parity restriction, covers
all odd orders at least eleven, and includes the previously untreated
even order $n=12$. In addition to sharing the five spectra and the
independence, clique, and chromatic numbers, the two graphs in each pair
have equal chromatic numbers of their complements and equal maximum-cut
numbers. The construction uses the ordinary joins $G\vee K_{n-10}$ and 
$H\vee K_{n-10}$, where $G$ and $H$ are the $4$-regular, cospectral, 
nonisomorphic graphs on ten vertices introduced by van Dam and Haemers
\cite{vanDamHaemers2003}. The regularity of these seed graphs allows a
single orthogonal similarity to establish cospectrality with respect to
all five matrices after the joins are formed. Moreover, joining a graph
with a complete graph preserves its Lov\'{a}sz number. Consequently, the
two Lov\'{a}sz numbers remain distinct and are individually independent
of $n$.

The verification of this strict $\vartheta$-separation also differs from
that in \cite{Sason2024}. Example~4.18 therein reports numerical values
obtained by semidefinite programming for the relevant seed graphs. Here,
we derive exact analytical formulas for the Lov\'{a}sz numbers of $G$
and $H$ and prove that they are distinct. The preservation of these
values under joins then establishes the separation throughout the
infinite family, without reliance on numerical optimization or tolerance
considerations.
We further determine the cardinality-constrained maximum-cut profiles of
the two seed graphs and of their complements. These profiles yield exact
formulas for the relevant maximum-cut numbers and establish their equality
within each pair, both for $G\vee K_{n-10}$ and $H\vee K_{n-10}$, and for 
the corresponding joins $\overline{G}\vee K_{n-10}$ and $\overline{H} 
\vee K_{n-10}$.

At order $n=10$, we first present a regular pair satisfying all the
stated properties except irregularity. We then exhibit a connected,
irregular, nonisomorphic pair that is cospectral with respect to all
five matrices, has equal independence, clique, chromatic, complement
chromatic, and maximum-cut numbers of the graphs and their complements, 
and has distinct Lov\'{a}sz $\vartheta$-numbers. An exhaustive SageMath 
computation shows that no pair of connected, irregular, nonisomorphic 
graphs on at most nine vertices shares the first four spectra while also having 
equal values of all these combinatorial invariants. Hence, ten is the 
smallest possible order of such a connected irregular pair. Together with 
the construction for $n\geq 11$, this proves the existence of pairs 
with all the stated properties for every integer $n\geq 10$, and their 
nonexistence for $n \leq 9$.

Throughout the paper, all graphs are finite, undirected, and simple.
Matrices are denoted by boldface capital letters, and vectors by
underlined lowercase letters.

\section{Main Results and Proofs}

For a simple graph \(F\) on \(n\) vertices, let
\(\boldsymbol{A}(F)\) and \(\boldsymbol{D}(F)\) denote its adjacency matrix and
diagonal degree matrix, respectively. Its Laplacian and signless Laplacian
matrices are defined, respectively, by
\begin{equation}
    \boldsymbol{L}(F) \coloneqq \boldsymbol{D}(F)-\boldsymbol{A}(F),
    \qquad 
    \boldsymbol{Q}(F) \coloneqq \boldsymbol{D}(F)+\boldsymbol{A}(F).
\end{equation}
If \(F\) has no isolated vertices, its normalized Laplacian matrix is defined by
\begin{equation}
    \boldsymbol{\mathcal{L}}(F)
    \coloneqq
    \boldsymbol{I}_n
    -\boldsymbol{D}(F)^{-1/2} \, \boldsymbol{A}(F) \, \boldsymbol{D}(F)^{-1/2}.
\end{equation}

The matrices
\(\boldsymbol{I}_n\) and \(\boldsymbol{J}_{r\times s}\) denote the identity
matrix of order \(n\) and the \(r\times s\) all-ones matrix, respectively.
For simplicity, we write
\(\boldsymbol{J}_n\coloneqq\boldsymbol{J}_{n\times n}\).
The Seidel matrix of \(F\) is defined by
\begin{equation}
    \boldsymbol{S}(F)
    \coloneqq \boldsymbol{J}_n-\boldsymbol{I}_n-2\boldsymbol{A}(F).
\end{equation}
For \(u\in V(F)\), its neighborhood, namely the set of vertices adjacent to
\(u\), is denoted by \(N_F(u)\), or simply by \(N(u)\) when the underlying
graph is clear. The join of graphs \(F_1\) and \(F_2\), denoted by \(F_1\vee F_2\), 
is obtained from the disjoint union of \(F_1\) and \(F_2\) by adding every edge 
between their vertex sets.

\subsection{Simultaneous Cospectrality with Distinct Lovász Numbers}
\label{subsection: Simultaneous Cospectrality with Distinct Lovász Numbers}

\begin{theorem} \label{theorem: all-orders}
For every integer \(n\geq 11\), there exists a pair of connected,
irregular, nonisomorphic graphs, each on \(n\) vertices, that are
cospectral with respect to the adjacency, Laplacian, signless Laplacian,
normalized Laplacian, and Seidel matrices, have the same independence number,
clique number, chromatic number, complement chromatic number,
and maximum-cut number, but have distinct Lov\'{a}sz \(\vartheta\)-numbers 
whose values are independent of \(n\). The same conclusion holds for 
\(n=10\), with irregularity replaced by regularity.
\end{theorem}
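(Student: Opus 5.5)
The construction is already fixed by the introduction: take the $4$-regular, cospectral, nonisomorphic graphs $G,H$ on ten vertices of van Dam and Haemers~\cite{vanDamHaemers2003}, and set $G_n\coloneqq G\vee K_{n-10}$ and $H_n\coloneqq H\vee K_{n-10}$ for $n\geq 11$. For $n=10$ we use $G,H$ themselves, which gives the regular case.

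\textbf{Cospectrality.} Since $G$ and $H$ are $4$-regular and cospectral, their adjacency matrices are orthogonally similar via some $\boldsymbol{P}$ with $\boldsymbol{P}^{\mathsf T}\boldsymbol{A}(G)\boldsymbol{P}=\boldsymbol{A}(H)$. Both matrices have $\one$ as an eigenvector for the eigenvalue $4$, and the remaining eigenspaces lie in $\one^\perp$. So we may choose $\boldsymbol{P}$ with $\boldsymbol{P}\one=\one$, by matching orthonormal eigenbases that begin with $\one/\sqrt{10}$. Then $\boldsymbol{P}$ commutes with $\boldsymbol{J}_{10}$, and hence $\boldsymbol{P}\boldsymbol{J}_{10\times(n-10)}=\boldsymbol{J}_{10\times(n-10)}$. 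Put $\widehat{\boldsymbol{P}}=\diag(\boldsymbol{P},\boldsymbol{I}_{n-10})$. This single matrix conjugates $\boldsymbol{A}(G_n)$ to $\boldsymbol{A}(H_n)$. The degree matrices of $G_n$ and $H_n$ agree: vertices of the seed have degree $n-6$, and vertices of the clique have degree $n-1$. Each degree matrix is constant on the two blocks, so it commutes with $\widehat{\boldsymbol{P}}$, and $\widehat{\boldsymbol{P}}$ also fixes $\boldsymbol{J}_n$. Therefore $\boldsymbol{L}$, $\boldsymbol{Q}$, $\boldsymbol{\mathcal{L}}$ and $\boldsymbol{S}$ are all conjugated by $\widehat{\boldsymbol{P}}$, which gives cospectrality for all five matrices. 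Connectivity is immediate. Irregularity holds because the degrees $n-6$ and $n-1$ differ. Nonisomorphism follows from the $\vartheta$-separation below, or directly from $G\not\cong H$: the clique vertices are exactly the vertices of degree $n-1$, since no seed vertex is dominating.

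\textbf{Integer invariants.} We use the join identities
\begin{align*}
\alpha(F\vee K_m)&=\alpha(F), &
\omega(F\vee K_m)&=\omega(F)+m, &
\chi(F\vee K_m)&=\chi(F)+m,
\end{align*}
and $\chi(\overline{F\vee K_m})=\chi(\overline F\cup mK_1)=\chi(\overline F)$. These reduce the equalities to computing $\alpha,\omega,\chi,\chi(\overline{\cdot})$ for $G$ and $H$ on ten vertices, which is a finite check done by exhibiting colorings and cliques.

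For max-cut, let $S$ contain $s$ seed vertices and $t$ clique vertices, with $m=n-10$. The cut value is
\[
c_F(s)+t(m-t)+s(m-t)+(10-s)t,
\]
where $c_F(s)$ is the largest cut in $F$ with one side of size $s$. So $\mathrm{mc}(F\vee K_m)$ depends only on the cardinality-constrained profile $s\mapsto c_F(s)$, $s=0,\dots,10$. It therefore suffices to show that $c_G(s)=c_H(s)$ for every $s$, by explicit partitions plus upper bounds. For the upper bounds I would first try the spectral bound for a fixed-size cut,
\[
c_F(s)\le \frac{s(10-s)}{10}\,\lambda_{\max}(\boldsymbol{L}),
\]
which is tight when attained, and fall back on counting or case analysis where it is not tight.

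\textbf{Lovász numbers.} We have $\vartheta(F\vee K_m)=\max\{\vartheta(F),\vartheta(K_m)\}=\max\{\vartheta(F),1\}=\vartheta(F)$. Hence the values are independent of $n$, and it remains to prove $\vartheta(G)\neq\vartheta(H)$ exactly. This is the main obstacle. Since the graphs are only regular and not vertex-transitive, the Hoffman bound $\vartheta\le -n\lambda_{\min}/(k-\lambda_{\min})$ coincides for both, because they are cospectral, so it cannot separate them. My plan is to use the automorphism groups of $G$ and $H$ to symmetrize the SDP, reducing it to a small one over the orbitals. I would then solve this reduced primal and dual in closed form, producing matching feasible solutions, that is, a primal matrix and a dual certificate with equal objective values. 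This gives algebraic values for $\vartheta(G)$ and $\vartheta(H)$, and the final step is to compare them and show they differ.
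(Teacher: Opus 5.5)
\paragraph{Verdict.} The proposal has a genuine gap: it never proves $\vartheta(G)\neq\vartheta(H)$, which is the one property the theorem is really about.

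\paragraph{What is fine.} Everything outside the Lov\'asz numbers is sound and follows the paper's construction. In two places it is slightly slicker than the paper.
\begin{itemize}
\item Since $\widehat{\boldsymbol P}\one_n=\one_n$, the same similarity fixes $\boldsymbol J_n$ and conjugates $\boldsymbol S(G_n)$ to $\boldsymbol S(H_n)$ directly. The paper's separate equitable-partition lemma for the Seidel spectra is therefore unnecessary.
\item Your fixed-size spectral cut bound works here. With $\lambda_{\max}(\boldsymbol L)=\tfrac12(9+\sqrt{17})$ it gives $c_F(5)\le 16.40$ and $c_F(4)\le 15.75$. Because $e(S,V\setminus S)=4|S|-2e(S)$ is even, these sharpen to $16$ and $14$. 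The bound $16$ is attained by $\{a,c,d,e,i\}$ in both graphs. The bound $14$ is attained by $\{a,b,g,j\}$ in $G$ and by $\{a,b,f,j\}$ in $H$.
\end{itemize}
The finite checks still have to be carried out. This includes the upper bounds $\alpha=\omega=3$ (an enumeration, not just exhibiting colorings and cliques) and a proof that $G\not\cong H$.

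\paragraph{Why the plan does not close the gap.} ``Symmetrize, solve the reduced primal and dual in closed form, and compare'' is a programme, not an argument, and it is not routine here.
\begin{itemize}
\item $\mathrm{Aut}(H)$ has order only $4$. It is generated by $(a\,c)(d\,e)(f\,g)(h\,j)$ and $(a\,f)(b\,i)(c\,g)(d\,h)(e\,j)$, with orbits $\{a,c,f,g\}$, $\{b,i\}$, $\{d,e,h,j\}$.
\item Consequently the symmetrized SDP still has roughly a dozen free orbital parameters and positive semidefinite blocks of sizes $3,3,2,2$.
\item Its optimum turns out to be governed by the root in $(0,\tfrac12)$ of $2\rho^4+4\rho^3-4\rho^2-2\rho+1$.
\end{itemize}
Nothing in the proposal produces a feasible primal solution or a dual certificate for either graph, so the final comparison has no inputs.

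\paragraph{The missing ingredient.} You do not need exact values. An upper bound on one $\vartheta$ and a strictly larger lower bound on the other suffice, and both come from short explicit certificates.
\begin{itemize}
\item \emph{Upper bound for $G$.} The paper uses the partition inequality $\vartheta(F)\le\vartheta(F[S])+\vartheta(F[T])$, which follows from monotonicity under adding edges and additivity over disjoint unions. Take $S=\{a,b,c,d,e,f,j\}$ and $T=\{g,h,i\}$. In $G[S]$ the pairs $\{a,d\}$ and $\{c,e\}$ are adjacent true twins. Deleting one vertex of such a pair leaves $\vartheta$ unchanged (merge $\underline x_u+\underline x_v$ in the Gram formulation). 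What remains is the $5$-cycle $a,b,c,j,f$, and $G[T]=K_3$. Hence $\vartheta(G)\le\sqrt5+1$.
\item \emph{Lower bound for $H$.} The paper writes down vectors $\underline z_x\in\mathbb R^4$, orthogonal along every edge of $H$ and depending on a parameter $r\in(0,\tfrac12)$. They satisfy $\|\sum_x\underline z_x\|^2/\sum_x\|\underline z_x\|^2=2+2r+\frac{1-2r}{1-r-r^2}$. Already $r=\tfrac13$ gives $\vartheta(H)\ge\tfrac{49}{15}>1+\sqrt5$.
\end{itemize}
The matching lower bound for $G$ and the dual certificate for $H$, which the paper also supplies, are needed only for the exact values, not for the separation.

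Until some such pair of certificates is supplied, the proposal establishes everything in the theorem except the distinctness of the Lov\'asz numbers. Nonisomorphism itself you can already get from $G\not\cong H$ via the degree argument.
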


\begin{proof}
We first specify two seed graphs on 10~vertices and verify their properties, including
an exact separation of their Lov\'asz numbers. We then extend them to every order $n \geq 11$.

\subsubsection{The seed graphs}
\label{subsubsection: The seed graphs}
Let $G$ and $H$ have the common vertex set
\(
V=\{a,b,c,d,e,f,g,h,i,j\}, 
\)
whose neighborhoods are listed in Table~\ref{tab:seed-neighborhoods}. 
{\small
\begin{table}[htbp]
\centering
\caption{Neighborhoods of the vertices in the graphs $G$ and $H$.}
\label{tab:seed-neighborhoods}
\begin{tabular}{|c|c|c|}
\toprule
Vertex & Neighbors in $G$ & Neighbors in $H$ \\
\midrule
$a$ & $b,d,f,h$ & $b,d,g,h$ \\
$b$ & $a,c,d,e$ & $a,c,d,e$ \\
$c$ & $b,e,g,j$ & $b,e,f,j$ \\
$d$ & $a,b,f,h$ & $a,b,f,h$ \\
$e$ & $b,c,g,j$ & $b,c,g,j$ \\
$f$ & $a,d,i,j$ & $c,d,h,i$ \\
$g$ & $c,e,h,i$ & $a,e,i,j$ \\
$h$ & $a,d,g,i$ & $a,d,f,i$ \\
$i$ & $f,g,h,j$ & $f,g,h,j$ \\
$j$ & $c,e,f,i$ & $c,e,g,i$ \\
\bottomrule
\end{tabular}
\end{table}}
\begin{figure}[htbp]
    \centering
    \includegraphics[width=0.89\linewidth]{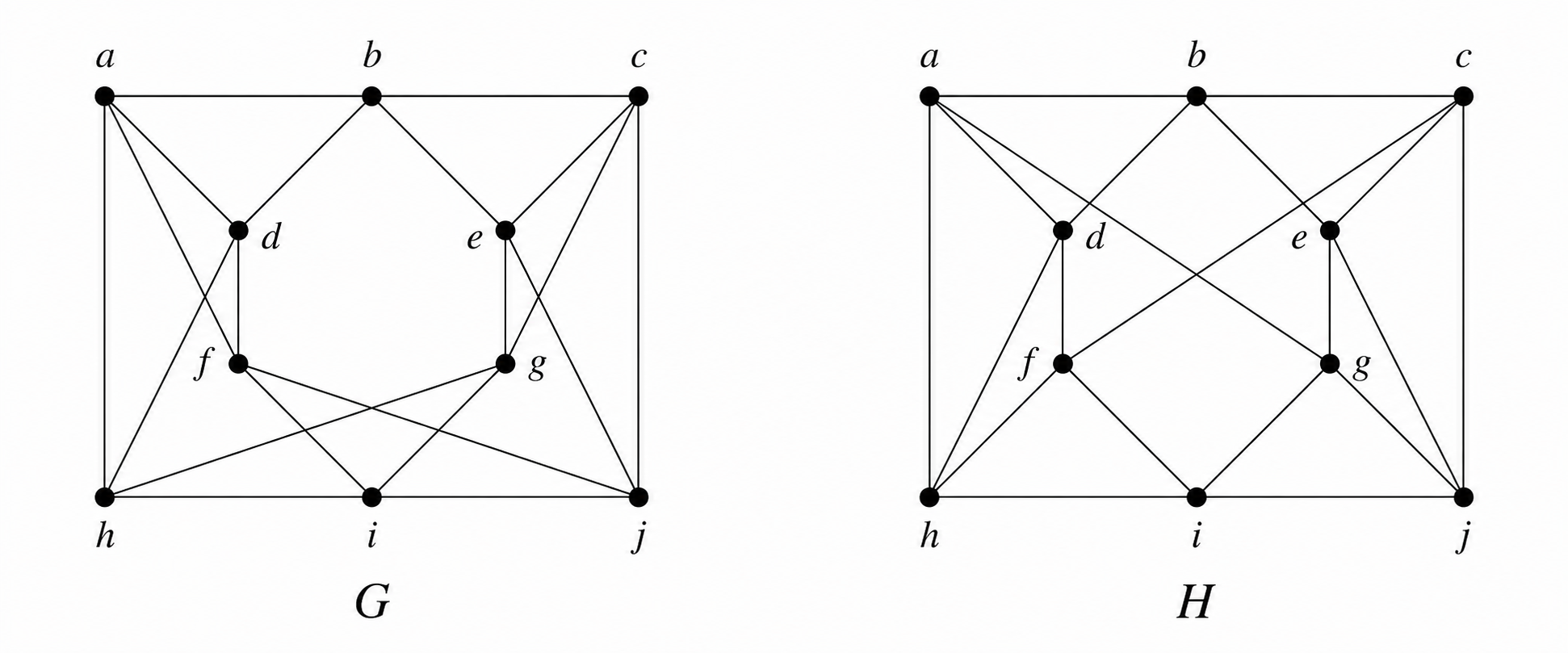}
    \caption{Cospectral, nonisomorphic 4-regular graphs on 10 vertices.}
    \label{fig:G_and_H_regular_graphs}
\end{figure}
These $4$-regular, cospectral, and nonisomorphic graphs were introduced 
by van Dam and Haemers; see Figure~2 of \cite{vanDamHaemers2003}. Unlike 
that figure, Figure~\ref{fig:G_and_H_regular_graphs} includes vertex labels, 
which will be used below. We first confirm that $G$ and $H$ are adjacency-cospectral 
and nonisomorphic. 

\begin{itemize}
\item 
For an edge $\{u,v\}$, the number of triangles containing $\{u,v\}$ is
$|N(u)\cap N(v)|$, and the multiset of these numbers is preserved under
graph isomorphisms. In $G$, the edge $\{a,d\}$ is contained in the three
triangles $\{a,b,d\}$, $\{a,d,f\}$, and $\{a,d,h\}$, whereas every edge of 
$H$ is contained in at most two triangles. Hence, $G$ and $H$ are nonisomorphic.

\item 
A direct calculation shows that the adjacency matrices of $G$ and $H$ have 
the same characteristic polynomial:
\begin{align}
 \det\bigl(x \boldsymbol{I}_{10}-\boldsymbol{A}(G)\bigr)
 &=(x-4)(x-1)(x+1)^4(x^2-5)(x^2+x-4) \notag \\
 \label{eq:seed-polynomial}
 &=\det\bigl(x \boldsymbol{I}_{10}-\boldsymbol{A}(H)\bigr).
\end{align}
Thus $G$ and $H$ are adjacency-cospectral. Due to their regularity, the
adjacency-cospectrality of $G$ and $H$ implies their cospectrality also 
with respect to the Laplacian, signless Laplacian, normalized Laplacian,
and Seidel matrices. 
\end{itemize}

\smallskip 
For a graph $X$, let
\[
 I_X(z) \coloneqq \sum_{k\geq 0} i_k(X) \, z^k,
 \qquad
 C_X(z) \coloneqq \sum_{k\geq 0} c_k(X) \, z^k,
\]
where $i_k(X)$ and $c_k(X)$ denote, respectively, the numbers of
independent sets and cliques of cardinality $k$. Direct enumeration from
the neighborhood table (Table~\ref{tab:seed-neighborhoods}) gives, for 
both $X=G$ and $X=H$,
\begin{equation}\label{eq:count-polynomials}
 I_X(z)=1+10z+25z^2+12z^3,
 \qquad
 C_X(z)=1+10z+20z^2+8z^3.
\end{equation}
Consequently,
\begin{equation}\label{eq:seed-alpha-omega}
 \alpha(G)=\alpha(H)=\omega(G)=\omega(H)=3.
\end{equation}
The sets
\[
 \{a,c,i\},\quad\{b,f,g\},\quad\{d,e\},\quad\{h,j\}
\]
are independent in both graphs and give proper $4$-colorings. Since each
independent set has at most three vertices, three color classes cannot
cover ten vertices. Hence
\begin{equation}
\label{eq:seed-chi}
 \chi(G)=\chi(H)=4.
\end{equation}

\medskip 

The complements also have equal chromatic numbers. By \eqref{eq:seed-alpha-omega}, 
since $\alpha(\overline G)=\omega(G)=3$ and $\alpha(\overline H)=\omega(H)=3$, 
we have
\[
 \chi(\overline G), \, \chi(\overline H)
 \geq\left\lceil\tfrac{10}{3}\right\rceil=4.
\]
The following clique partitions of the respective seed graphs yield
proper $4$-colorings of their complements:
\[
\begin{aligned}
 G:&\quad \{g,h,i\},\ \{c,e,j\},\ \{a,d,f\},\ \{b\},\\
 H:&\quad \{g,i,j\},\ \{d,f,h\},\ \{b,c,e\},\ \{a\}.
\end{aligned}
\]
Consequently,
\begin{equation}
\label{eq:seed-chi-complement}
 \chi(\overline G)=\chi(\overline H)=4.
\end{equation}

\begin{definition}
Let \(G=(V,E)\) be a finite simple graph of order \(n\). For every
integer \(k\) with \(0\leq k\leq n\), define the cardinality-constrained 
maximum-cut 
\begin{equation}
\label{eq: cardinality-constrained max-cut}
\mathrm{mc}_k(G)
\coloneqq
\max_{\substack{S\subseteq V\\ |S|=k}} e_G(S,V\setminus S),
\end{equation}
where \(e_G(S,V\setminus S)\) denotes the number of edges having one
endpoint in \(S\) and the other in \(V\setminus S\). The maximum-cut 
number of \(G\) is given by 
\begin{equation}
\label{eq: max-cut}
\mathrm{mc}(G)
\coloneqq \max_{S\subseteq V} e_G(S,V\setminus S)
= \max_{0\leq k\leq n} \mathrm{mc}_k(G).
\end{equation}
\end{definition}

\begin{proposition}
\label{proposition: max-cut of regular G, H and complements}
For the graphs \(G\) and \(H\) in Figure~\ref{fig:G_and_H_regular_graphs},
\begin{align}
\label{eq1: 16.09.26}
\mathrm{mc}_k(G)=\mathrm{mc}_k(H)
\qquad (0\leq k\leq 10),
\end{align}
and their common value is given by 
\begin{align}
\label{eq2: 16.09.26}
\mathrm{mc}_k(G)=\mathrm{mc}_k(H)=
\begin{cases}
4k, & 0\leq k\leq 3,\\
14, & k=4,\\
16, & k=5,\\
14, & k=6,\\
4(10-k), & 7\leq k\leq 10.
\end{cases}
\end{align}
Moreover,
\begin{align}
\label{eq3: 16.09.26}
\mathrm{mc}_k(\overline{G})=\mathrm{mc}_k(\overline{H})
\qquad (0\leq k\leq 10),
\end{align}
and their common value is given by 
\begin{align}
\label{eq4: 16.09.26}
\mathrm{mc}_k(\overline{G})
= \mathrm{mc}_k(\overline{H})
=
\begin{cases}
5k, & 0\leq k\leq 3,\\
18, & k=4,\\
19, & k=5,\\
18, & k=6,\\
5(10-k), & 7\leq k\leq 10.
\end{cases}
\end{align}
Equivalently,
\begin{align}
\label{eq5: 16.09.26}
\begin{array}{c|ccccccccccc}
k
 & 0 & 1 & 2 & 3 & 4 & 5 & 6 & 7 & 8 & 9 & 10\\
\hline
\mathrm{mc}_k(G)
 & 0 & 4 & 8 & 12 & 14 & \mathbf{16} & 14 & 12 & 8 & 4 & 0\\
\mathrm{mc}_k(H)
 & 0 & 4 & 8 & 12 & 14 & \mathbf{16} & 14 &12 & 8 & 4 & 0\\
\mathrm{mc}_k(\overline{G})
 & 0 & 5 & 10 & 15 & 18 & \mathbf{19} & 18 & 15 & 10 & 5 & 0\\
\mathrm{mc}_k(\overline{H})
 & 0 & 5 & 10 & 15 & 18 & \mathbf{19} & 18 & 15 & 10 & 5 & 0
\end{array}
\end{align}
and
\begin{align}
\label{eq6: 16.09.26}
\mathrm{mc}(G)=\mathrm{mc}(H)= 16, \qquad 
\mathrm{mc}(\overline{G}) = \mathrm{mc}(\overline{H}) = 19. 
\end{align}
\end{proposition}

\begin{proof}
Both \(G\) and \(H\) are \(4\)-regular graphs on \(10\) vertices. Thus,
for every \(X\in\{G,H\}\) and every \(S \subseteq V(X)\),
\[
e_X(S, V(X) \setminus S) = 4|S|-2 e_X(S),
\]
where \(e_X(S)\) is the number of edges in the subgraph induced by
\(S\). By \eqref{eq:seed-alpha-omega}, 
\[
\alpha(G)=\alpha(H)=3.
\]
Consequently, for \(0\leq k\leq 3\), one may choose an independent set
\(S\) of cardinality \(k\), and hence
\[
\mathrm{mc}_k(G)=\mathrm{mc}_k(H)=4k.
\]

\smallskip
If \(|S|=4\), then \(e_X(S)\geq 1\), because \(\alpha(X)=3\). Therefore,
\[
e_X(S,V(X)\setminus S)\leq 4\cdot 4-2=14.
\]
This bound is attained in \(G\) by
\[
S=\{a,b,g,j\},
\]
which induces only the edge \( \{a,b\} \), and in \(H\) by
\[
S=\{a,b,f,j\},
\]
which also induces only the edge \( \{a,b\} \). It follows that
\[
\mathrm{mc}_4(G)=\mathrm{mc}_4(H)=14.
\]

\smallskip
If \(|S|=5\), then \(e_X(S)\geq 2\). Indeed, if the subgraph induced by
\(S\) had at most one edge, then \(S\) would contain an independent set
of cardinality at least \(4\), contradicting \(\alpha(X)=3\). Hence
\[
e_X(S,V(X)\setminus S)\leq 4\cdot 5-2\cdot 2=16.
\]
For both graphs, the set
\[
S=\{a,c,d,e,i\}
\]
induces exactly the two edges \( \{a,d \} \) and \( \{c,e\} \). Consequently,
\[
\mathrm{mc}_5(G)=\mathrm{mc}_5(H)=16.
\]

We next consider the complementary graphs. Since \(G\) and \(H\) are
\(4\)-regular on \(10\) vertices, both \(\overline{G}\) and
\(\overline{H}\) are \(5\)-regular. Thus, for
\(Y\in\{\overline{G},\overline{H}\}\) and \(S\subseteq V(Y)\),
\[
e_Y(S,V(Y)\setminus S) = 5|S|-2 e_Y(S).
\]
By \eqref{eq:seed-alpha-omega}, 
\[
\alpha(\overline{G})=\alpha(\overline{H})=3.
\]
Therefore, for \(0\leq k\leq 3\),
\[
\mathrm{mc}_k(\overline{G})
= \mathrm{mc}_k(\overline{H})
= 5k.
\]

If \(|S|=4\), then \(e_Y(S)\geq 1\), and hence
\[
e_Y(S,V(Y)\setminus S)
\leq 5\cdot 4-2=18.
\]
For \(\overline{G}\), the set
\[
S=\{a,b,d,f\}
\]
induces exactly one edge, namely \( \{b,f\} \). For \(\overline{H}\), the set
\[
S=\{a,b,d,h\}
\]
induces exactly one edge, namely \( \{b,h\} \). Therefore,
\[
\mathrm{mc}_4(\overline{G})
= \mathrm{mc}_4(\overline{H})
= 18.
\]

Direct inspection of the adjacency lists shows that every set
of five vertices in either \(\overline{G}\) or \(\overline{H}\) induces
at least three edges. Hence, if \(|S|=5\), then
\[
e_Y(S,V(Y)\setminus S)
\leq 5\cdot 5-2\cdot 3=19.
\]
For both complementary graphs, the set
\[
S=\{a,b,d,f,h\}
\]
induces exactly three edges. More precisely, these edges are
\[
\{b,f\},\ \{b,h\},\ \{d,f\}
\]
in \(\overline{G}\), and
\[
\{a,f\},\ \{b,f\},\ \{b,h\}
\]
in \(\overline{H}\). Consequently,
\[
\mathrm{mc}_5(\overline{G})
= \mathrm{mc}_5(\overline{H})
= 19.
\]

By symmetry, a set and its complement determine the same cut. Therefore, for every
graph \(F\) of order \(n\), and every integer \(0 \leq k \leq n\),
\[
\mathrm{mc}_k(F)=\mathrm{mc}_{n-k}(F).
\]
Applying this identity to \(G\), \(H\), \(\overline{G}\), and
\(\overline{H}\) with \( n = 10 \) gives all the asserted values for \(6\leq k\leq 10\). This completes the proof
of \eqref{eq1: 16.09.26}--\eqref{eq5: 16.09.26}.
Finally, combining \eqref{eq: max-cut} and \eqref{eq5: 16.09.26} yields \eqref{eq6: 16.09.26}.
\end{proof}

\medskip
\subsubsection{An exact separation of the Lov\'asz numbers}
\label{subsubsection: An exact separation of the Lov\'asz numbers}

\begin{proposition}
\label{proposition: Lovasz number of G}
For the graph $G$ specified in Figure~\ref{fig:G_and_H_regular_graphs},
\begin{equation}
\label{eq: Lovasz number of G}
\vartheta(G)=1+\sqrt{5} = 3.2360\ldots.
\end{equation}
\end{proposition}

\begin{proof}
We prove \eqref{eq: Lovasz number of G} in four steps, deriving
matching upper and lower bounds on \(\vartheta(G)\).

\medskip
\noindent\textbf{Step 1: The Gram-matrix formulation of the Lov\'{a}sz theta number.}
Recall the semidefinite characterization of $\vartheta(F)$ for a simple graph \(F\) on \(n\) vertices:
\begin{equation}\label{eq:theta-sdp}
\vartheta(F)=
\max\bigl\{
\langle \boldsymbol{J}_n, \boldsymbol{X} \rangle:
\boldsymbol{X} \succeq \boldsymbol{0} ,\quad \operatorname{tr}(\boldsymbol{X})=1,\quad
X_{v,w}=0 \text{ for all } \{v,w\}\in E(F)
\bigr\},
\end{equation}
where \(\boldsymbol{J}_n\) denotes the all-ones \(n \times n\) matrix. Thus, the maximization in \eqref{eq:theta-sdp} 
is performed over all real positive semidefinite matrices \(\boldsymbol{X}\) of order \(n\), whose entries 
corresponding to the edges of \(F\) are zero.

To obtain an equivalent Gram-matrix formulation, recall that every real positive semidefinite matrix is a 
Gram matrix. Hence, for every positive semidefinite matrix \(\boldsymbol{X} \succeq \boldsymbol{0} \), there 
exists a family of real vectors \(\{\underline{x}_v\}_{v\in V(F)}\) such that
$$
X_{v,w}=\langle \underline{x}_v, \underline{x}_w\rangle,
\qquad v,w \in V(F).
$$
Under this representation, the edge constraints become
\begin{equation}
\label{eq: orthogonality}
\langle \underline{x}_v, \underline{x}_w\rangle=0
\qquad\text{for all }\{v,w\}\in E(F),
\end{equation}
and the trace constraint is equivalent to
\begin{equation}
\label{eq: trace constraint}
\operatorname{tr}(\boldsymbol{X})
=\sum_{v\in V(F)} X_{v,v}
=\sum_{v\in V(F)}\|\underline{x}_v\|^2 =1.
\end{equation}
Moreover,
\begin{align}
\langle \boldsymbol{J}_n, \boldsymbol{X} \rangle
&=\sum_{v,w\in V(F)} X_{v,w} \nonumber \\
&=\sum_{v,w\in V(F)}\langle \underline{x}_v, \underline{x}_w\rangle \nonumber \\
&=\left\langle\sum_{v\in V(F)} \underline{x}_v,
\sum_{w\in V(F)} \underline{x}_w\right\rangle \nonumber \\
\label{eq: inner product of Jn and X}
&=\left\|\sum_{v\in V(F)} \underline{x}_v\right\|^2.
\end{align}
Therefore, \eqref{eq:theta-sdp} is equivalent to maximizing
\(\bigl\|\sum_v \underline{x}_v\bigr\|^2\) over all such feasible vector families satisfying
\(\sum_v\|\underline{x}_v\|^2=1\). Finally, since simultaneously multiplying all the vectors 
by a nonzero scalar does not affect the ratio
$$
\frac{\left\|\sum_{v\in V(F)} \underline{x}_v\right\|^2}
     {\sum_{v\in V(F)}\|\underline{x}_v\|^2},
$$
the normalization may be incorporated into the denominator. Equivalently,
\begin{equation}\label{eq:theta-gram}
\vartheta(F)=
\max \frac{\left\|\sum_{v\in V(F)} \underline{x}_v\right\|^2}{\sum_{v\in V(F)}\|\underline{x}_v\|^2},
\end{equation}
where the maximum is over all families of real vectors that are not all zero and satisfy \eqref{eq: orthogonality}.
Indeed, conversely by the trace constraint \eqref{eq: trace constraint}, every such family determines a feasible 
matrix in \eqref{eq:theta-sdp} by setting
\begin{equation}
\label{eq: normalization}
X_{v,w} = \frac{\langle \underline{x}_v, \underline{x}_w\rangle}{\sum_{u\in V(F)}\| \underline{x}_u\|^2}.
\end{equation}

\medskip
\noindent\textbf{Step 2: A partition inequality for the Lov\'{a}sz theta number.}
We establish the following partition inequality, which, to the best of
our knowledge, has not been stated explicitly in the literature; see
\cite{Knuth94,Lovasz1979}, Chapter~11 of \cite{Lovasz19}, and Section~2.5 of 
\cite{Sason2024} for related properties of the Lov\'{a}sz theta function.
\begin{lemma}
\label{lemma: partition inequality}
Let \( F \) be a finite simple graph. Then,  
\begin{equation} \label{eq:partition-bound}
\vartheta(F)\leq\vartheta(F[S])+\vartheta(F[T]),
\qquad V(F)=S\mathbin{\dot\cup}T,
\end{equation}
for arbitrary nonempty subsets $S$ and $T$ that partition the vertex set of $F$,
where $F[S]$ and $F[T]$ denote, respectively, the induced subgraphs of $F$ on 
$S$ and $T$. 
\end{lemma}
\begin{proof}
The partition inequality \eqref{eq:partition-bound} follows from two 
standard properties of the Lov\'{a}sz theta function: it is monotonically 
decreasing under the addition of edges and additive under disjoint 
unions; see \cite[Section~18]{Knuth94} for the latter property. 
Indeed, the graph \(F\) is obtained from the disjoint union
\(F[S] \mathbin{\dot\cup} F[T]\) by adding the edges having one endpoint
in \(S\) and the other in \(T\). Therefore,
\[
\vartheta(F)
\leq \vartheta\bigl(F[S]\mathbin{\dot\cup}F[T]\bigr)
= \vartheta(F[S])+\vartheta(F[T]).
\]
\end{proof}

An alternative short, self-contained proof of Lemma~\ref{lemma: partition inequality}, 
which relies on the formulation \eqref{eq:theta-gram}, is as follows.
\begin{proof}
Take any feasible family in
\eqref{eq:theta-gram}, normalized so that
$\sum_{v \in V(F)} \|\underline{x}_v\|^2=1$, and set
\[
\rho=\sum_{v\in S}\|\underline{x}_v\|^2,
\qquad
\sigma=\sum_{v\in T}\|\underline{x}_v\|^2.
\]
Then 
\begin{equation}
\label{eq1: 04.09.26}
\rho+\sigma=1, \qquad \rho, \sigma \geq 0,
\end{equation}
and \eqref{eq:theta-gram} gives
\begin{equation}
\label{eq2: 04.09.26}
\left\|\sum_{v\in S} \underline{x}_v\right\|^2
\leq \rho\,\vartheta(F[S]),
\qquad
\left\|\sum_{v\in T} \underline{x}_v\right\|^2
\leq \sigma\,\vartheta(F[T]).
\end{equation}
If either coefficient vanishes, the corresponding vectors
are all zero, so the inequalities remain valid. It then 
follows that 
\begin{align}
\left\|\sum_{v\in V(F)} \underline{x}_v\right\|^2
& \leq \left( \sqrt{\rho\,\vartheta(F[S])} 
       + \sqrt{\sigma\,\vartheta(F[T])} \right)^2 \notag \\
& \leq \vartheta(F[S])+\vartheta(F[T]),
\end{align}
where the first inequality holds by the triangle inequality 
and \eqref{eq2: 04.09.26}, and the second inequality holds 
by the Cauchy--Schwarz inequality and \eqref{eq1: 04.09.26}.
Taking the maximum over all feasible vectors 
$(\underline{x}_v)_{v \in V(F)}$ in \eqref{eq:theta-gram}, whose 
sum of squared $\ell_2$-norms equals~1 by \eqref{eq1: 04.09.26}, 
proves the partition inequality \eqref{eq:partition-bound}.
\end{proof}

\medskip
\noindent\textbf{Step 3: The upper bound $\vartheta(G)\leq 1+\sqrt{5}$.} 
For an upper bound on $\vartheta(G)$, consider the following partition of $V(G)$:
\begin{equation}
\label{eq:cycle-classes}
S=\{a,b,c,d,e,f,j\},
\qquad T=\{g,h,i\}.
\end{equation}
Recall that two adjacent vertices \(u\) and \(v\) are called
\emph{adjacent true twins} if they have the same closed neighborhood,
that is, \(N[u]=N[v]\). Equivalently,
\begin{equation}
\label{eq: adjacent true twins}
N(u)\setminus\{v\}=N(v)\setminus\{u\}.
\end{equation}
In the induced subgraph $G[S]$, the vertices $a$ and $d$ are
adjacent true twins, since
\begin{equation}
\label{eq1: adjacent true twins}
N_{G[S]}(a)\setminus\{d\} = N_{G[S]}(d)\setminus\{a\} = \{b,f\}.
\end{equation}
Likewise, $c$ and $e$ are adjacent true twins, since
\begin{equation}
\label{eq2: adjacent true twins}
N_{G[S]}(c)\setminus\{e\} = N_{G[S]}(e)\setminus\{c\} = \{b,j\}.
\end{equation}
We next rely on the following lemma. 
\begin{lemma}
\label{lemma: adjacent true twins}
Let \(u\) and \(v\) be adjacent true twins in a graph \(F\). Then
\[
\vartheta(F-v)=\vartheta(F).
\]
In other words, deleting one vertex from a pair of adjacent true twins does not
affect the Lov\'{a}sz theta number of the graph. 
\end{lemma}

\begin{proof}
Since \(F-v\) is an induced subgraph of \(F\), monotonicity of the
Lov\'{a}sz theta number for induced subgraphs gives
\(
\vartheta(F-v)\leq \vartheta(F).
\)
For the reverse inequality, consider a feasible family
\(\{\underline{x}_w\}_{w\in V(F)}\) in the Gram formulation
\eqref{eq:theta-gram}. Since \(\{u,v\}\in E(F)\), the Gram 
constraints in \eqref{eq: orthogonality} give
\(
\langle \underline{x}_u,\underline{x}_v\rangle=0.
\)
After deleting \(v\), define
\[
\underline{y}_u=\underline{x}_u+\underline{x}_v,
\qquad
\underline{y}_w=\underline{x}_w
\quad\text{for }w\in V(F)\setminus\{u,v\}.
\]
We first verify feasibility. If \(w\) is adjacent to \(u\), then, since
\(u\) and \(v\) are adjacent true twins, \(w\) is also adjacent to \(v\).
Consequently,
\[
\langle \underline{y}_u,\underline{y}_w\rangle
= \langle \underline{x}_u,\underline{x}_w\rangle + \langle \underline{x}_v,\underline{x}_w\rangle
=0.
\]
All other orthogonality constraints in \eqref{eq: orthogonality} are left unchanged. Moreover,
\[
\sum_{w\in V(F-v)}\underline{y}_w = \sum_{w\in V(F)}\underline{x}_w,
\]
so the numerator in \eqref{eq:theta-gram} is unchanged. Since
\(\underline{x}_u\perp\underline{x}_v\), we also have
\[
\|\underline{y}_u\|^2 = \|\underline{x}_u\|^2+\|\underline{x}_v\|^2.
\]
Thus,
\[
\sum_{w\in V(F-v)}\|\underline{y}_w\|^2 = \sum_{w\in V(F)}\|\underline{x}_w\|^2,
\]
and the normalization in \eqref{eq: trace constraint} is preserved. Hence every feasible value for
\(F\) is also feasible for \(F-v\), which yields
\[
\vartheta(F)\leq\vartheta(F-v).
\]
Combining the two inequalities proves the claim.
\end{proof}

Consequently, by Lemma~\ref{lemma: adjacent true twins},
\[
\vartheta\bigl(G[S]\bigr) = \vartheta\bigl(G[S\setminus\{d,e\}]\bigr).
\]
The reduced graph is the cycle $a,b,c,j,f,a$, hence
\begin{equation}\label{eq:compression}
\vartheta(G[S]) = \vartheta(C_5) = \sqrt{5}.
\end{equation}
Also, $G[T]=K_3$ and $\vartheta(K_3)=1$. Consequently,
by Lemma~\ref{lemma: partition inequality}
\begin{equation} \label{eq: G-upper}
\vartheta(G)\leq \vartheta(G[S])+\vartheta(G[T])
=1+\sqrt{5}.
\end{equation}

\medskip
\noindent\textbf{Step 4: The matching lower bound
$\vartheta(G)\geq 1+\sqrt{5}$.}
Set
\begin{equation}
\label{eq: r,k}
r=\tfrac12 \, (\sqrt{5}-1),
\qquad
k=\sqrt{5}-2=r^3.
\end{equation}
Consider the symmetric matrix
\[
\boldsymbol{B}=
\begin{pmatrix}
1&0&r&0&r\\
0&1&0&r&r\\
r&0&1&r&0\\
0&r&r&1&0\\
r&r&0&0&1
\end{pmatrix},
\]
whose rows and columns are indexed, in this order, by
$a,b,c,f,g$. Its eigenvalues (including multiplicities) are
\[
\sqrt{5},\qquad
\tfrac12 (5-\sqrt{5}),\qquad
\tfrac12 (5-\sqrt{5}),\qquad
0,\qquad 0.
\]
Hence, $\boldsymbol{B}$ is positive semidefinite and has rank $3$. It is
therefore the Gram matrix of vectors
\[
\underline{u}_a,\underline{u}_b,\underline{u}_c,
\underline{u}_f,\underline{u}_g\in\mathbb{R}^3.
\]
Since all the diagonal entries of $\boldsymbol{B}$ are equal to $1$, these
vectors are unit vectors. We may thus choose them so that
\[
\langle\underline{u}_x,\underline{u}_y\rangle=B_{x,y},
\qquad x,y\in\{a,b,c,f,g\}.
\]

\noindent 
Define
\[
\underline{u}_d=\underline{u}_e=\underline{0},
\qquad
\underline{u}_h=\underline{u}_f,
\qquad
\underline{u}_j=\underline{u}_g,
\qquad
\underline{u}_i
=k(\underline{u}_a+\underline{u}_c-\underline{u}_b).
\]
For every edge $\{x,y\} \in E(G)$ with $x,y\neq i$, the above definitions,
together with the zero entries of $\boldsymbol{B}$, imply that
\[
\langle\underline{u}_x,\underline{u}_y\rangle=0.
\]
Indeed, any constraint involving $d$ or $e$ holds automatically
because $\underline{u}_d=\underline{u}_e=\underline{0}$.
The remaining constraints reduce, using
$\underline{u}_h=\underline{u}_f$ and
$\underline{u}_j=\underline{u}_g$, to inner products represented
by zero entries of $\boldsymbol{B}$.
It remains to verify the constraints corresponding to the edges
incident with $i$. Since the neighbors of $i$ are $f,g,h,j$, we have
\[
\langle\underline{u}_i,\underline{u}_f\rangle = k(0+r-r) = 0,
\qquad
\langle\underline{u}_i,\underline{u}_g\rangle = k(r+0-r) = 0.
\]
The corresponding inner products with $\underline{u}_h$ and
$\underline{u}_j$ also vanish because
$\underline{u}_h=\underline{u}_f$ and
$\underline{u}_j=\underline{u}_g$. Hence, all edge-orthogonality
constraints in \eqref{eq: orthogonality} are satisfied, and this family is feasible for
\eqref{eq:theta-gram}.

\noindent 
Let 
\[
\underline{w}
=\underline{u}_a+\underline{u}_b+\underline{u}_c
 +2\underline{u}_f+2\underline{u}_g,
\qquad
\underline{z}
=\underline{u}_a+\underline{u}_c-\underline{u}_b.
\]
It follows directly from $\boldsymbol{B}$ that
\begin{equation}
\label{eq:wz}
\begin{aligned}
\|\underline{w}\|^2
   &=11+18r=2+9\sqrt{5},\\
\langle\underline{w},\underline{z}\rangle
   &=1+2r=\sqrt{5},\\
\|\underline{z}\|^2
   &=3+2r=2+\sqrt{5}.
\end{aligned}
\end{equation}
By \eqref{eq: r,k}, we have
\begin{equation}
\label{eq2: k,r}
k(3+2r)=1.
\end{equation}
Moreover, since $\underline{u}_i=k\underline{z}$, it follows from
\eqref{eq:wz} and \eqref{eq2: k,r} that
\begin{align}
\label{eq:norm-ui}
\|\underline{u}_i\|^2
=k^2\|\underline{z}\|^2
=k^2(3+2r)
=k
=\sqrt{5}-2.
\end{align}
The seven vectors
\[
\underline{u}_a,\underline{u}_b,\underline{u}_c,
\underline{u}_f,\underline{u}_g,
\underline{u}_h,\underline{u}_j
\]
are unit vectors and therefore contribute $7$ to the sum of the
squared norms, whereas
$\underline{u}_d=\underline{u}_e=\underline{0}$. Consequently,
by \eqref{eq:norm-ui}
\begin{align}
\sum_{v\in V(G)}\|\underline{u}_v\|^2
&=7+\|\underline{u}_i\|^2 \nonumber \\
&=7+k \nonumber \\
\label{eq7: 16.09.26}
&=5+\sqrt{5}.
\end{align}
Furthermore, the definitions of the vectors give
\[
\sum_{v\in V(G)}\underline{u}_v
=\underline{w}+\underline{u}_i
=\underline{w}+k\underline{z}.
\]
Thus, by \eqref{eq: r,k}, \eqref{eq:wz}, and \eqref{eq:norm-ui},
\begin{align}
\left\|\sum_{v\in V(G)}\underline{u}_v\right\|^2
&=\|\underline{w}+k\underline{z}\|^2 \nonumber \\
&=\|\underline{w}\|^2
  +2k \langle \underline{w},\underline{z} \rangle
  +\|\underline{u}_i\|^2 \nonumber \\
&=(2+9\sqrt{5})
  +2(\sqrt{5}-2)\sqrt{5}
  +(\sqrt{5}-2) \nonumber \\
\label{eq8: 16.09.26}  
&=10+6\sqrt{5}.
\end{align}
Substituting this feasible family into \eqref{eq:theta-gram} and   
relying on \eqref{eq7: 16.09.26} and \eqref{eq8: 16.09.26}, we obtain 
\begin{align}
\vartheta(G) &\geq
\frac{\left\| \sum_{v\in V(G)} \underline{u}_v \right\|^2}
     {\sum_{v\in V(G)} \|\underline{u}_v\|^2} \nonumber\\
\label{eq:G-lower}
&=\frac{10+6\sqrt{5}}{5+\sqrt{5}}
=1+\sqrt{5}.
\end{align}
Finally, combining the matching upper and lower bounds in \eqref{eq: G-upper} 
and \eqref{eq:G-lower} gives \eqref{eq: Lovasz number of G}.
\end{proof}

\begin{proposition}
\label{proposition: Lovasz number of H}
For the graph $H$ specified in Figure~\ref{fig:G_and_H_regular_graphs},
\begin{equation}
\label{eq: Lovasz number of H}
\vartheta(H) =2+2\rho^\ast+\frac{1-2\rho^\ast}{1-\rho^\ast-(\rho^\ast)^2},
\end{equation}
where
\begin{equation}
\label{eq: rho}
\rho^\ast = \tfrac{1}{2} \left( \sqrt{z} - \sqrt{7-z-4 z^{-1/2}} -1 \right),
\end{equation}
and 
\begin{equation}
\label{eq: z}
z = \tfrac{7}{3} + \tfrac{8}{3} \cos\left( \tfrac{1}{3} \, \arccos \tfrac{101}{128} \right).
\end{equation}
Numerically, 
\begin{equation}
\label{eq: Lovasz number of H - numeric}
\vartheta(H) = 3.2688\ldots .
\end{equation}
\end{proposition}

\begin{proof}
We prove the result in four steps, deriving matching lower and upper
bounds on \(\vartheta(H)\). Throughout the proof, we use the
Gram-matrix formulation \eqref{eq:theta-gram} of the Lov\'{a}sz theta function.

\medskip
\noindent\textbf{Step 1: A one-parameter family of feasible Gram representations.}
Let $r \in (0, \tfrac12)$ and define
\begin{equation}
\label{eq: u,v,q}
u=\frac{1-2r}{1-r},
\qquad
v=\frac{r}{1-r},
\qquad
q=\frac{1-2r}{1-r-r^2}.
\end{equation}
These quantities are positive and satisfy
\begin{equation}
\label{eq:H-parameter-identities}
u+v=1,
\qquad
\frac{r^2}{v}=r-r^2,
\qquad
q\left(1+\frac{1}{u}+r\right)=2.
\end{equation}

\noindent 
Assign to the vertices of \(H\) the vectors in \(\mathbb{R}^4\) listed 
in Table~\ref{tab:vectors assigned to V(H)}.
\begin{table}[htbp]
\centering
\caption{Vectors in \(\mathbb{R}^4\) assigned to the vertices of \(H\) in Figure~\ref{fig:G_and_H_regular_graphs}.}
\label{tab:vectors assigned to V(H)}
\begin{tabular}{|c|cccr|}
\toprule
$x \in V(H)$ & \multicolumn{4}{c}{$\underline{z}_x \in \mathbb{R}^4$} \vline \\ 
\midrule \midrule 
$a$ & ($1$, & $\sqrt{u}$,     & $0$,           & $\sqrt{v}$) \\[0.2cm]
$b$ & ($q$, & $-\frac{q}{\sqrt{u}}$,  & $-q\sqrt{r}$,  & $0$) \\[0.2cm]
$c$ & ($1$, & $\sqrt{u}$,     & $0$,           & $-\sqrt{v}$) \\[0.2cm]
$d$ & ($r$, & $0$,            & $\sqrt{r}$,    & $-\frac{r}{\sqrt{v}}$) \\[0.2cm]
$e$ & ($r$, & $0,$            & $\sqrt{r}$,    & $\frac{r}{\sqrt{v}}$) \\[0.2cm]
$f$ & ($1$, & $-\sqrt{u}$,    & $0$,           & $\sqrt{v}$) \\[0.2cm]
$g$ & ($1$, & $-\sqrt{u}$,    & $0$,           & $-\sqrt{v}$) \\[0.2cm]
$h$ & ($r$, & $0$,            & $-\sqrt{r}$,   & $-\frac{r}{\sqrt{v}}$) \\[0.2cm]
$i$ & ($q$, & $\frac{q}{\sqrt{u}}$,   & $q\sqrt{r}$,   & $0$) \\[0.2cm]
$j$ & ($r$, & $0$,            & $-\sqrt{r}$,   & $\frac{r}{\sqrt{v}}$) \\[0.15cm] \hline 
\end{tabular}
\end{table}

\noindent 
To verify the edge-orthogonality constraints in \eqref{eq: orthogonality} for \( F = H \), it is convenient to
group the edges of \(H\) according to their inner products:
\[
\begin{array}{|c|c|}
\toprule 
\text{Edges } \{x,y\} \in E(H) & \langle \underline{z}_x,\underline{z}_y\rangle \\ 
\midrule \midrule 
\{a,b\},\ \{b,c\},\ \{f,i\},\ \{g,i\}
    & q-q=0 \\[1mm]
\{a,d\},\ \{a,h\},\ \{c,e\},\ \{c,j\},\ \{d,f\},\ \{e,g\},\ \{f,h\},\ \{g,j\}
    & r-r=0 \\[1mm]
\{b,d\},\ \{b,e\},\ \{h,i\},\ \{i,j\}
    & qr-qr=0 \\[1mm]
\{a,g\},\ \{c,f\}
    & 1-u-v=0 \\[1mm]
\{d,h\},\ \{e,j\}
    & r^2-r+\frac{r^2}{v}=0 \\[0.1cm] \hline 
\end{array}
\]
where the last two expressions in the right-hand column vanish by
\eqref{eq:H-parameter-identities}. Hence
\[
\langle \underline{z}_x,\underline{z}_y\rangle=0
\qquad
\text{for every }\{x,y\}\in E(H),
\]
so the family is feasible in the Gram formulation
\eqref{eq:theta-gram}.

\medskip
\noindent\textbf{Step 2: Evaluation and optimization of the lower bound.}
By \eqref{eq:H-parameter-identities} and Table~\ref{tab:vectors assigned to V(H)}, 
the squared \(\ell_2\)-norms of the vectors \( \{ \underline{z}_x \}_{x \in V(H)} \) are given by 
\begin{equation}
\label{eq1: 07.09.26}
\begin{aligned}
&\|\underline{z}_a\|^2
=\|\underline{z}_c\|^2
=\|\underline{z}_f\|^2
=\|\underline{z}_g\|^2
=1+u+v=2,\\
&\|\underline{z}_b\|^2
=\|\underline{z}_i\|^2
=q^2\left(1+\frac{1}{u}+r\right)=2q,\\
&\|\underline{z}_d\|^2
=\|\underline{z}_e\|^2
=\|\underline{z}_h\|^2
=\|\underline{z}_j\|^2
=r^2+r+\frac{r^2}{v}=2r.
\end{aligned}
\end{equation}
Moreover, the last three coordinates cancel when all the ten vectors in 
Table~\ref{tab:vectors assigned to V(H)} are summed, and 
\begin{equation}
\label{eq2: 07.09.26}
\sum_{x\in V(H)} \underline{z}_x = (4+2q+4r,0,0,0).
\end{equation}
It follows from \eqref{eq:theta-gram}, \eqref{eq1: 07.09.26}, and \eqref{eq2: 07.09.26} that
\[
\vartheta(H) \geq \frac{(4+2q+4r)^2}{8+4q+8r} =2+q+2r.
\]
Consequently, by \eqref{eq: u,v,q} and the last inequality, 
\begin{equation}
\label{eq:H-one-parameter-bound}
\vartheta(H) \geq F(r) \coloneqq 2+2r+\frac{1-2r}{1-r-r^2},
\qquad 0<r<\tfrac12.
\end{equation}
In order to get the tightest lower bound within this form, we next maximize the right-hand side of 
\eqref{eq:H-one-parameter-bound} over the free parameter \( r \in (0,\tfrac12) \).
Differentiation gives
\[
F'(r) = \frac{2r^4+4r^3-4r^2-2r+1}{(r^2+r-1)^2}, \qquad 
F''(r) = \frac{2r(2r^2-3r+3)}{(r^2+r-1)^3}.
\]
For \(0<r<\tfrac12\), the numerator of \(F''(r)\) is positive,
whereas its denominator is negative. Thus, \(F\) is strictly
concave on this interval. Furthermore,
\[
\lim_{r\downarrow0}F'(r)=1,
\qquad
\lim_{r\uparrow\frac12}F'(r)=-6.
\]
Hence \(F\) has a unique maximizer \(\rho^\ast\in(0,\tfrac12)\),
characterized by setting the numerator of $F'$ to zero, which gives 
the polynomial equation 
\begin{equation}
\label{eq:H-rho-polynomial}
2(\rho^\ast)^4+4(\rho^\ast)^3
-4(\rho^\ast)^2-2\rho^\ast+1=0.
\end{equation}
The unique root in \((0,\tfrac12)\) is \( \rho^\ast \) as given in \eqref{eq: rho}
(a full derivation is provided in Appendix~\ref{app:quartic-equation}).
Taking \(r=\rho^\ast\) in \eqref{eq:H-one-parameter-bound} yields
\begin{equation}
\label{eq:H-lower}
\vartheta(H)
\geq
2+2\rho^\ast+
\frac{1-2\rho^\ast}
     {1-\rho^\ast-(\rho^\ast)^2}
=3.26880\ldots.
\end{equation}

\medskip 
Proposition~\ref{proposition: Lovasz number of G} and the lower bound on
\(\vartheta(H)\) in \eqref{eq:H-lower} establish that
\(\vartheta(G)\neq\vartheta(H)\). We proceed further by determining
\(\vartheta(H)\) exactly through matching primal and dual certificates,
thereby replacing a merely numerical semidefinite-programming computation
with a rigorous algebraic certificate of optimality. To complete the proof
of Proposition~\ref{proposition: Lovasz number of H}, we next show that the
lower bound in \eqref{eq:H-lower} is tight. In Step~3, we construct the dual
certificate using the automorphisms of \(H\) and the complementary-slackness
conditions associated with the primal Gram representation. In Step~4, we
verify its positive semidefiniteness by exploiting the simultaneous
eigenspace decomposition induced by two commuting involutions, thereby
reducing the original \(10\times 10\) dual matrix to four blocks of orders
two and three. Appendices~\ref{app:quartic-equation} and~\ref{app:H-block-diagonalization} 
provide the algebraic derivation of the relevant parameter and the details of the 
simultaneous-eigenspace decomposition and resulting block diagonalization.

\medskip
\noindent\textbf{Step 3: Construction of a dual certificate.}
Set \( \rho=\rho^\ast \) and define
\begin{equation}
\label{eq:H-tau}
\tau = 2+2\rho+\frac{1-2\rho}{1-\rho-\rho^2}.
\end{equation}
Recall from \eqref{eq:H-rho-polynomial} that
\begin{equation}
\label{eq:H-rho-relation}
2\rho^4+4\rho^3-4\rho^2-2\rho+1=0,
\qquad 0<\rho<\tfrac12.
\end{equation}

We first recall a general dual-certificate criterion for the
semidefinite program in \eqref{eq:theta-sdp}, which follows from
weak duality for semidefinite programming; see
Section~11.2 and Theorem~13.5 of \cite{Lovasz19}.
\begin{lemma}
\label{lemma: dual-certificate criterion}
Let \(F\) be a finite simple graph on \(n\) vertices. 
For \(x,y\in V(F)\), let \(\boldsymbol{E}_{x,y}\) denote the 
\(n\times n\) matrix whose \((x,y)\)-entry is equal to \(1\) and
whose remaining entries are zero. Suppose that there exist a real number 
\(t\) and a weight \( w_{\{x,y\}}\in\mathbb{R} \) for each 
unordered edge \(\{x,y\}\in E(F)\), such that
\begin{equation}
\label{eq:general-dual-certificate}
\boldsymbol{M}_F = t \boldsymbol{I}_n - \boldsymbol{J}_n 
+ \sum_{\{x,y\}\in E(F)} w_{\{x,y\}} ( \boldsymbol{E}_{x,y}+\boldsymbol{E}_{y,x} ) \succeq 0.
\end{equation}
Then
\begin{equation}
\label{eq: UB theta}
\vartheta(F) \leq t.
\end{equation}
\end{lemma}
\begin{proof}
Let \(\boldsymbol{X}\) be any feasible matrix for
\eqref{eq:theta-sdp}. Since
\(\boldsymbol{M}_F \succeq \boldsymbol{0}\), it admits a unique
positive semidefinite square root \(\boldsymbol{M}_F^{1/2}\).
Moreover, the feasibility of \(\boldsymbol{X}\) implies that
\(\boldsymbol{X} \succeq \boldsymbol{0}\). By the cyclicity of the
trace,
\[
\begin{aligned}
\langle \boldsymbol{M}_F,\boldsymbol{X} \rangle
&= \operatorname{tr}(\boldsymbol{M}_F\boldsymbol{X}) \\
&= \operatorname{tr}\!\left(
   \boldsymbol{M}_F^{1/2} \boldsymbol{X} \boldsymbol{M}_F^{1/2} \right)
\geq 0,
\end{aligned}
\]
where the inequality follows since
\[
\boldsymbol{M}_F^{1/2} \boldsymbol{X} \boldsymbol{M}_F^{1/2} \succeq \boldsymbol{0}.
\]

\noindent 
On the other hand,
\begin{align*}
\langle \boldsymbol{M}_F, \boldsymbol{X} \rangle
&= t\operatorname{tr}(\boldsymbol{X})-\langle \boldsymbol{J}_n, \boldsymbol{X} \rangle
+ \sum_{\{x,y\}\in E(F)} w_{\{x,y\}} \langle \boldsymbol{E}_{x,y}+\boldsymbol{E}_{y,x}, \boldsymbol{X} \rangle\\
&= t \operatorname{tr}(\boldsymbol{X})-\langle \boldsymbol{J}_n, \boldsymbol{X} \rangle
+ 2\sum_{\{x,y\}\in E(F)} w_{\{x,y\}} X_{x,y},
\end{align*}
where the last equality holds by the symmetry of \( \boldsymbol{X} \).
The feasibility constraints give \( \operatorname{tr}(\boldsymbol{X})=1 \)
and \( X_{x,y}=0 \) for every \( \{x,y\}\in E(F) \).
Consequently,
\[
0\leq\langle \boldsymbol{M}_F, \boldsymbol{X} \rangle
= t-\langle \boldsymbol{J}_n, \boldsymbol{X} \rangle,
\]
and therefore
\[
\langle \boldsymbol{J}_n, \boldsymbol{X} \rangle \leq t.
\]
Taking the maximum over all feasible matrices \(\boldsymbol{X}\) proves that
\( \vartheta(F) \leq t \).
\end{proof}

We now apply the dual-certificate criterion in Lemma~\ref{lemma: dual-certificate criterion} to 
\(F=H\), where \(H\) is specified in Table~\ref{tab:seed-neighborhoods}, with \(n=10\) and \(t=\tau\). 
Thus, it remains to find real edge weights
\[
w_{\{x,y\}} \in \mathbb{R},  \qquad  \forall \, \{x,y\}\in E(H),
\]
such that
\begin{equation}
\label{eq:H-dual-certificate-specialized}
\boldsymbol{M} = \tau \boldsymbol{I}_{10}-\boldsymbol{J}_{10} 
+ \sum_{\{x,y\}\in E(H)} w_{\{x,y\}} ( \boldsymbol{E}_{x,y}+\boldsymbol{E}_{y,x} ) \succeq \boldsymbol{0}.
\end{equation}
The construction of these weights is described next.

We first record the symmetries that will be used in constructing the
weights. Consider the two permutations
\begin{equation}
\label{eq: permutation pi}
\pi=(a\,c)(d\,e)(f\,g)(h\,j),
\end{equation}
and
\begin{equation}
\label{eq: permutation sigma}
\sigma=(a\,f)(b\,i)(c\,g)(d\,h)(e\,j).
\end{equation}
It follows directly from the neighborhoods in
Table~\ref{tab:seed-neighborhoods} that \(\pi\) and \(\sigma\) are
automorphisms of \(H\). They are commuting involutions:
\begin{equation}
\label{eq: commuting involutions}
\pi^2=\sigma^2=\operatorname{id},  \qquad  \pi\sigma=\sigma\pi.
\end{equation}
The group generated by \(\pi\) and \(\sigma\) has the following three
orbits on \(V(H)\):
\begin{equation}
\label{eq: three orbits}
\{a,c,f,g\}, \qquad \{b,i\}, \qquad \{d,e,h,j\}.
\end{equation}

We next determine suitable edge weights. Partition the edge set of
\(H\) into the following six classes:
\begin{equation}
\label{eq:H-edge-classes}
\begin{aligned}
\mathcal{E}_1 &= \bigl\{ \{a,b\},\{b,c\},\{f,i\},\{g,i\} \bigr\}, \\
\mathcal{E}_2 &= \bigl\{ \{a,d\},\{c,e\},\{f,h\},\{g,j\} \bigr\}, \\
\mathcal{E}_3 &= \bigl\{ \{a,g\},\{c,f\} \bigr\}, \\
\mathcal{E}_4 &= \bigl\{ \{a,h\},\{c,j\},\{d,f\},\{e,g\} \bigr\}, \\
\mathcal{E}_5 &= \bigl\{ \{b,d\},\{b,e\},\{h,i\},\{i,j\} \bigr\}, \\
\mathcal{E}_6 &= \bigl\{ \{d,h\},\{e,j\} \bigr\}.
\end{aligned}
\end{equation}
Each class \(\mathcal{E}_\nu\), where \(\nu\in[6]\), is invariant
under both \(\pi\) and \(\sigma\). We assign a common weight
\(w_\nu\) to all the edges in \(\mathcal{E}_\nu\).
Thus, we seek a matrix of the form
\begin{equation}
\label{eq:H-dual-matrix-general}
\boldsymbol{M} = \tau \boldsymbol{I}_{10}-\boldsymbol{J}_{10} 
+ \sum_{\nu=1}^{6} \Biggl\{ w_\nu \sum_{\{x,y\}\in \mathcal{E}_\nu} ( \boldsymbol{E}_{x,y} + \boldsymbol{E}_{y,x} ) \Biggr\}.
\end{equation}

The weights are determined from the complementary-slackness condition
associated with the primal solution constructed in Steps 1 and 2.
Let \(\boldsymbol{Z}\) be the \(10\times4\) matrix whose row indexed by
\(x\in V(H)\) is \(\underline{z}_x^{\mathsf T}\), where
\(\underline{z}_x\) is the vector in
Table~\ref{tab:vectors assigned to V(H)}, evaluated at \(r=\rho\).
The corresponding feasible primal matrix, whose trace is equal to~1 by 
\eqref{eq:theta-sdp}, is given by 
\[
\boldsymbol{X}^\ast =
\frac{\boldsymbol{Z} \boldsymbol{Z}^{\mathsf T}} {\operatorname{tr}(\boldsymbol{Z} \boldsymbol{Z}^{\mathsf T})}.
\]
By Step~2, its objective value is
\[
\langle \boldsymbol{J}_{10}, \boldsymbol{X}^\ast \rangle = \tau.
\]
Therefore, a dual matrix \(\boldsymbol{M}\) with the same objective value should
satisfy the complementary slackness condition 
\[
\langle \boldsymbol{M}, \boldsymbol{X}^\ast \rangle = 0.
\]
Since \( \boldsymbol{X}^\ast\) is a positive scalar multiple of \( \boldsymbol{Z} \boldsymbol{Z}^{\mathsf T}\),
this condition is equivalent to
\begin{align}
\label{eq1: 18.09.26}
\langle \boldsymbol{M}, \boldsymbol{Z} \boldsymbol{Z}^{\mathsf T} \rangle = 0.
\end{align}
For a positive semidefinite matrix \( \boldsymbol{M}\), this is equivalent to
\begin{equation}
\label{eq:H-complementary-slackness}
\boldsymbol{M} \boldsymbol{Z} = \boldsymbol{0}.
\end{equation}
Indeed, 
\begin{align*}
0 &=\bigl\langle \boldsymbol{M},
   \boldsymbol{Z}\boldsymbol{Z}^{\mathsf T}\bigr\rangle \\
&=\operatorname{tr}\bigl(
   \boldsymbol{M}\boldsymbol{Z}\boldsymbol{Z}^{\mathsf T}\bigr)\\
&=\operatorname{tr}\bigl(
   \boldsymbol{Z}^{\mathsf T}\boldsymbol{M}\boldsymbol{Z}\bigr)\\
&=\operatorname{tr}\bigl(
   \boldsymbol{Z}^{\mathsf T}\boldsymbol{M}^{1/2}
   \boldsymbol{M}^{1/2}\boldsymbol{Z}\bigr)\\
&=\bigl\|\boldsymbol{M}^{1/2}\boldsymbol{Z}\bigr\|_{\mathrm F}^{2},
\end{align*}
where \(\|\cdot\|_{\mathrm F}\) denotes the Frobenius norm. Consequently,
\( \boldsymbol{M}^{1/2}\boldsymbol{Z}=\boldsymbol{0} \), which then yields 
\( \boldsymbol{M}\boldsymbol{Z}
=\boldsymbol{M}^{1/2} \bigl(\boldsymbol{M}^{1/2}\boldsymbol{Z}\bigr)
=\boldsymbol{0} \).
Conversely, if \eqref{eq:H-complementary-slackness} holds, then clearly 
\eqref{eq1: 18.09.26} holds. 

We therefore impose \eqref{eq:H-complementary-slackness} to determine
candidate values of the real weights \(w_1,\ldots,w_6\) on the right-hand 
side of \eqref{eq:H-dual-matrix-general}. The positive semidefiniteness of 
the resulting matrix will be verified independently in Step~4.

To write these equations explicitly, observe from \eqref{eq: u,v,q} and 
\eqref{eq2: 07.09.26} (with $r = \rho$) that
\begin{equation}
\label{eq:H-vector-sum}
\underline{s}
\coloneqq
\sum_{x\in V(H)}\underline{z}_x = (4+2q+4\rho, 0, 0, 0),
\end{equation}
where
\[
q=\frac{1-2\rho}{1-\rho-\rho^2}.
\]
The row of \( \boldsymbol{M} \boldsymbol{Z} \) indexed by \(x \in V(H)\) is
\[
(\boldsymbol{M} \boldsymbol{Z})_x =
\tau \underline{z}_x - \underline{s} + \sum_{y \in N_H(x)} w_{x,y} \underline{z}_y.
\]
Consequently, the condition \( \boldsymbol{M} \boldsymbol{Z}=0\) is equivalent to
\begin{equation}
\label{eq:H-vector-slackness}
\tau\underline{z}_x + \sum_{y\in N_H(x)} w_{x,y} \, \underline{z}_y = \underline{s},
\qquad x \in V(H).
\end{equation}
To justify the reduction to one representative from each orbit, define
the orthogonal matrices
\[
R_\pi=\operatorname{diag}(1,1,1,-1),
\qquad
R_\sigma=\operatorname{diag}(1,-1,-1,1).
\]
The vectors in Table~\ref{tab:vectors assigned to V(H)} satisfy
\[
\underline{z}_{\pi(x)}=R_\pi\underline{z}_x,
\qquad
\underline{z}_{\sigma(x)}=R_\sigma\underline{z}_x,
\qquad x\in V(H).
\]
Moreover, since
\[
\underline{s}=(4+2q+4\rho,0,0,0),
\]
we have
\[
R_\pi\underline{s}=\underline{s},
\qquad
R_\sigma\underline{s}=\underline{s}.
\]
Because each edge class is invariant under both automorphisms, the
weights satisfy
\[
w_{\pi(x),\pi(y)}=w_{x,y},
\qquad
w_{\sigma(x),\sigma(y)}=w_{x,y}.
\]
Consequently, the equations in \eqref{eq:H-vector-slackness} are
equivariant under \(\pi\) and \(\sigma\). It is therefore enough to
impose them on one representative from each of the three orbits
in \eqref{eq: three orbits}.
Taking \(a\), \(b\), and \(d\) as representatives gives
\begin{align}
\tau\underline{z}_a
+w_1\underline{z}_b
+w_2\underline{z}_d
+w_3\underline{z}_g
+w_4\underline{z}_h
&=
\underline{s},
\label{eq:H-slackness-a}\\
\tau\underline{z}_b
+w_1(\underline{z}_a+\underline{z}_c)
+w_5(\underline{z}_d+\underline{z}_e)
&=
\underline{s},
\label{eq:H-slackness-b}\\
\tau\underline{z}_d
+w_2\underline{z}_a
+w_5\underline{z}_b
+w_4\underline{z}_f
+w_6\underline{z}_h
&=
\underline{s}.
\label{eq:H-slackness-d}
\end{align}
For example, \eqref{eq:H-slackness-a} follows from
\(
N_H(a)=\{b,d,g,h\},
\)
where
\[
\{a,b\}\in\mathcal{E}_1,\qquad
\{a,d\}\in\mathcal{E}_2,\qquad
\{a,g\}\in\mathcal{E}_3,\qquad
\{a,h\}\in\mathcal{E}_4.
\]
The other two equations follow similarly from
\[
N_H(b)=\{a,c,d,e\}, \qquad 
N_H(d)=\{a,b,f,h\}.
\]
Substituting the vectors from
Table~\ref{tab:vectors assigned to V(H)} into
\eqref{eq:H-slackness-a}--\eqref{eq:H-slackness-d} produces a linear
system for the six unknown weights. Several of the resulting scalar
equations are dependent because of the identities
\[
u+v=1,
\qquad
\frac{\rho^2}{v}=\rho-\rho^2,
\qquad
q\left(1+\frac1u+\rho\right)=2.
\]
Solving the independent equations, and reducing all powers
\(\rho^k\) with \(k\geq4\) by means of
\eqref{eq:H-rho-relation}, gives
\begin{equation}
\label{eq:H-dual-weights}
\begin{aligned}
w_1&=
-\frac{28\rho^3+72\rho^2+3\rho-62}{25},\\[1mm]
w_2&=
-\frac{88\rho^3+362\rho^2+38\rho-377}{125},\\[1mm]
w_3&=
\frac{188\rho^3+512\rho^2+138\rho-27}{125},\\[1mm]
w_4&=
-\frac{134\rho^3+216\rho^2-266\rho-111}{125},\\[1mm]
w_5&=
\frac{26\rho^3+74\rho^2-24\rho+21}{25},\\[1mm]
w_6&=
-\frac{2(134\rho^3+216\rho^2-266\rho-111)}{125}.
\end{aligned}
\end{equation}
Thus, the weights in \eqref{eq:H-dual-weights} are obtained by 
imposing complementary slackness on the symmetry-invariant family 
\eqref{eq:H-dual-matrix-general}.
We henceforth let \( \boldsymbol{M} \) be given by \eqref{eq:H-dual-matrix-general},
together with the real weights \( \{ w_\nu \}_{\nu=1}^6 \) as given by \eqref{eq:H-dual-weights}. 
By construction, \eqref{eq:H-complementary-slackness} holds. 
It remains to verify that \( \boldsymbol{M} \succeq \boldsymbol{0}\), 
which is carried out in Step~4.

\medskip
\noindent\textbf{Step 4: Verifying that \(\boldsymbol{M}\) 
is positive semidefinite.}
Recall the commuting involutive automorphisms \(\pi\) and \(\sigma\)
introduced in Step~3. Since each edge class \(\mathcal{E}_\nu\) is
invariant under both automorphisms, the matrix \(\boldsymbol{M}\)
commutes with the permutation matrices associated with \(\pi\) and
\(\sigma\). We therefore exploit these symmetries to block diagonalize
\(\boldsymbol{M}\); for the general theory of symmetry reduction in
semidefinite programming, see~\cite{Vallentin2009}.
It follows that \(\mathbb{R}^{10}\) decomposes as the orthogonal 
direct sum
\begin{align}
\label{eq: direct sum}
\mathbb{R}^{10} = W_{++}\oplus W_{+-}\oplus W_{-+}\oplus W_{--},
\end{align}
where
\begin{align}
\label{eq: W-subspaces}
W_{\varepsilon,\delta}
= \bigl\{ \underline{v} \in \mathbb{R}^{10}:
\pi\underline{v} = \varepsilon \underline{v},\
\sigma\underline{v} = \delta \underline{v} \bigr\},
\qquad \varepsilon, \delta \in \{+1,-1\},
\end{align}
and each of these four subspaces is invariant under \( \boldsymbol{M} \);
that is, \( \boldsymbol{M} \underline{v} \in W_{\varepsilon,\delta} \)
for all \( \underline{v} \in W_{\varepsilon,\delta} \) (see Appendix~\ref{app:H-block-diagonalization}).

Let \(\{\underline{e}_x:x\in V(H)\}\) be the standard orthonormal
basis of \(\mathbb{R}^{10}\). For
\(\varepsilon,\delta\in\{+1,-1\}\), define
\begin{align}
\label{eq: A,D}
A_{\varepsilon,\delta}
= \tfrac12 \bigl( \underline{e}_a
+\varepsilon\underline{e}_c
+\delta\underline{e}_f
+\varepsilon\delta\underline{e}_g \bigr), \qquad 
D_{\varepsilon,\delta}
= \tfrac12 \bigl( \underline{e}_d
+\varepsilon\underline{e}_e
+\delta\underline{e}_h
+\varepsilon\delta\underline{e}_j \bigr),
\end{align}
and, for \(\delta\in\{+1,-1\}\), define
\begin{align}
\label{eq: B}
B_{+,\delta}
= \tfrac{\sqrt{2}}{2} \, 
\bigl( \underline{e}_b+\delta\underline{e}_i \bigr).
\end{align}
These vectors give the following orthonormal bases:
\begin{align}
\begin{aligned}
W_{++} &= \operatorname{span} \{A_{++},B_{++},D_{++}\},\\
W_{+-} &= \operatorname{span} \{A_{+-},B_{+-},D_{+-}\},\\
W_{-+} &= \operatorname{span} \{A_{-+},D_{-+}\},\\
W_{--} &= \operatorname{span} \{A_{--},D_{--}\}.
\end{aligned}
\end{align}
In particular,
\begin{align}
\label{eq: dimensions W-subspaces}
\dim W_{++}=3,\qquad \dim W_{+-}=3,\qquad \dim W_{-+}=2,\qquad \dim W_{--}=2.
\end{align}
With respect to these ordered bases, the four restrictions of \( \boldsymbol{M} \)
are represented by the following matrices:
\begin{equation}
\label{eq:H-block-pp}
\boldsymbol{M}_{++} =
\begin{pmatrix}
\tau-4+w_3  & \sqrt{2}(w_1-2)  & w_2+w_4-4 \\
\sqrt{2}(w_1-2)  & \tau-2  & \sqrt{2} (w_5-2) \\
w_2+w_4-4  & \sqrt{2}(w_5-2)  & \tau-4+w_6
\end{pmatrix},
\end{equation}
\begin{equation}
\label{eq:H-block-pm}
\boldsymbol{M}_{+-} =
\begin{pmatrix}
\tau-w_3  & \sqrt{2}w_1  & w_2-w_4 \\
\sqrt{2}w_1  & \tau  & \sqrt{2} w_5 \\
w_2-w_4  & \sqrt{2} w_5  &  \tau-w_6
\end{pmatrix},
\end{equation}
\begin{equation}
\label{eq:H-block-mp}
\boldsymbol{M}_{-+} =
\begin{pmatrix}
\tau-w_3  & w_2+w_4\\
w_2+w_4   & \tau+w_6
\end{pmatrix},
\end{equation}
\begin{equation}
\label{eq:H-block-mm}
\boldsymbol{M}_{--} =
\begin{pmatrix}
\tau+w_3 & w_2-w_4\\
w_2-w_4 & \tau-w_6
\end{pmatrix}.
\end{equation}
The details of this block diagonalization and the verification of the
entries in \eqref{eq:H-block-pp}--\eqref{eq:H-block-mm} are provided
in Appendix~\ref{app:H-block-diagonalization}.
Thus, instead of checking positive semidefiniteness of the original
\(10\times10\) matrix, it is enough to check the four matrices in
\eqref{eq:H-block-pp}--\eqref{eq:H-block-mm}.

Substitution of the weights from \eqref{eq:H-dual-weights}, followed
by reduction modulo \eqref{eq:H-rho-relation}, gives
\[
\operatorname{rank}(\boldsymbol{M}_{++})=2, \qquad
\operatorname{rank}(\boldsymbol{M}_{+-})=1, \qquad
\operatorname{rank}(\boldsymbol{M}_{-+})=1, \qquad
\operatorname{rank}(\boldsymbol{M}_{--})=2.
\]
The total rank is therefore
\(
\operatorname{rank}(\boldsymbol{M})=2+1+1+2=6.
\)
This is also consistent with the complementary-slackness relation
\(\boldsymbol{M}\boldsymbol{Z}=\boldsymbol{0}\). Indeed, the four
columns of \(\boldsymbol{Z}\) are linearly independent and therefore
span a four-dimensional subspace of \(\ker\boldsymbol{M}\). Since
\(\operatorname{rank}(\boldsymbol{M})=6\), the rank--nullity theorem
gives
\[
\dim\ker\boldsymbol{M}=4.
\]
Hence the columns of \(\boldsymbol{Z}\) span
\(\ker\boldsymbol{M}\).

Using the rank identities above, positive semidefiniteness of the four
blocks reduces to the positivity of the following five quantities,
each of which is a positive scalar multiple of the corresponding
leading principal minor:
\begin{align}
\label{eq:H-delta-pp-1}
\Delta_{++}^{(1)} &= \frac{4}{125} \left( 288\rho^3+662\rho^2+238\rho-177 \right), \\[0.1cm]
\label{eq:H-delta-pp-2}
\Delta_{++}^{(2)} &= -\frac{64}{625} \left( 816\rho^3-1916\rho^2-1034\rho+561 \right),\\[0.1cm]
\label{eq:H-delta-pm}
\Delta_{+-}^{(1)} = \Delta_{-+}^{(1)} &= -\frac{4}{125} \left( 88\rho^3+362\rho^2+38\rho-377 \right), \\[0.1cm]
\label{eq:H-delta-mm-1}
\Delta_{--}^{(1)} &= \frac{4}{125} \left( 288\rho^3+662\rho^2+238\rho+323 \right), \\[0.1cm]
\label{eq:H-delta-mm-2}
\Delta_{--}^{(2)} &= \frac{64}{15625} \left( 43694\rho^3+144531\rho^2+18294\rho-23476 \right).
\end{align}
More precisely, \(\Delta_{++}^{(1)}\) and
\(\Delta_{++}^{(2)}\) are respectively \(4\) and \(32\) times the
first and second leading principal minors of
\(\boldsymbol{M}_{++}\). Moreover,
\(\Delta_{+-}^{(1)}\) and \(\Delta_{-+}^{(1)}\) are each four times
the displayed leading diagonal entry of the corresponding rank-one
block. Finally, \(\Delta_{--}^{(1)}\) is four times the first leading
principal minor of \(\boldsymbol{M}_{--}\), whereas
\(\Delta_{--}^{(2)}\) is \(16\) times its determinant.

For \(\boldsymbol{M}_{++}\), positivity of its first two leading
principal minors, together with the equality 
\(\operatorname{rank}(\boldsymbol{M}_{++})=2\), implies that
\(\boldsymbol{M}_{++}\) is positive semidefinite. For each of the
rank-one blocks \(\boldsymbol{M}_{+-}\) and
\(\boldsymbol{M}_{-+}\), positivity of the displayed diagonal entry
implies positive semidefiniteness. Finally, positivity of the two
leading principal minors of the full-rank \(2 \times 2\) block
\(\boldsymbol{M}_{--}\) implies that this block is positive definite.

Let
\[
p(x)=2x^4+4x^3-4x^2-2x+1.
\]
A direct calculation gives \( p(0.353)>0 \) and  \( p(0.354)<0.\)
Since \(\rho\) is the unique root of \(p\) in \((0,\tfrac12)\), 
it follows that \( 0.353 < \rho < 0.354 \).
The positivity checks can now be carried out using only rational
interval arithmetic. On this interval, the expressions in
\eqref{eq:H-delta-pp-1}--\eqref{eq:H-delta-mm-2} satisfy
\[
\begin{aligned}
&\Delta_{++}^{(1)}
>0.0695, \qquad 
\Delta_{++}^{(2)}
>0.7024,\\
&\Delta_{+-}^{(1)}
=\Delta_{-+}^{(1)}
>10.0569,\\
&\Delta_{--}^{(1)}
>16.0695, \qquad 
\Delta_{--}^{(2)}
>11.9341.
\end{aligned}
\]
Since all the scaling factors described above are positive, these
inequalities establish the required positivity of the corresponding
leading principal minors. Together with the rank identities, this gives
\[
\boldsymbol{M}_{++} \succeq \boldsymbol{0}, \qquad
\boldsymbol{M}_{+-} \succeq \boldsymbol{0}, \qquad
\boldsymbol{M}_{-+} \succeq \boldsymbol{0}, \qquad
\boldsymbol{M}_{--} \succeq \boldsymbol{0}.
\]
Since \(\mathbb{R}^{10}\) is the orthogonal direct sum of the four
corresponding invariant subspaces, it follows that
\(
\boldsymbol{M} \succeq \boldsymbol{0}.
\)
The matrix \( \boldsymbol{M} \) is therefore a feasible dual certificate with
objective value \(\tau\). Hence,
\[
\vartheta(H) \leq \tau =
2+2\rho^\ast+ \frac{1-2\rho^\ast}{1-\rho^\ast-(\rho^\ast)^2}.
\]
Combining this upper bound with the matching lower bound
\eqref{eq:H-lower} gives \eqref{eq: Lovasz number of H}
as asserted.
\end{proof}

\medskip
\subsubsection{Extension to every order larger than~10}
\label{subsubsection: Extension to every order larger than ten}

Fix $n \geq 11$, set $t=n-10 \geq 1$, and define
\begin{equation}\label{eq:join-construction}
 G_n \coloneqq G \vee K_t, \qquad H_n \coloneqq H \vee K_t,
\end{equation}
where $G$ and $H$ are shown in Figure~\ref{fig:G_and_H_regular_graphs}.
Both graphs are connected and have $n$ vertices. The ten original vertices 
of either $G$ or $H$ have degree $t+4=n-6$, whereas their $t$ added vertices have
degree $(t-1)+10=n-1$. Thus both graphs $G_n$ and $H_n$ are connected and irregular.

Since $\boldsymbol{A}(G)$ and $\boldsymbol{A}(H)$ are real symmetric, 
and cospectral matrices, and
\[
\boldsymbol{A}(G) \, \one_{10} = \boldsymbol{A}(H) \, \one_{10}=4 \, \one_{10}, 
\]
we may choose orthonormal eigenbases for them whose first vector is 
$\one_{10}/\sqrt{10}$ and whose eigenvalues occur in the same order. 
Mapping the eigenbasis for $H$ to that for $G$ gives an orthogonal 
matrix $\boldsymbol{U}$ satisfying
\begin{equation}\label{eq:seed-similarity}
 \boldsymbol{U}^{\mathsf T} \, \boldsymbol{A}(G) \, \boldsymbol{U} = \boldsymbol{A}(H), 
 \qquad \boldsymbol{U} \one_{10} = \one_{10}.
\end{equation}
The adjacency matrices of the graphs in \eqref{eq:join-construction} are given by 
\begin{equation}\label{eq:join-adjacencies}
 \boldsymbol{A}(G_n)=\begin{pmatrix}
 \boldsymbol{A}(G) & \boldsymbol{J}_{10\times t} \\ 
 \boldsymbol{J}_{t\times10} & \boldsymbol{J}_t-\boldsymbol{I}_t
 \end{pmatrix},\qquad
 \boldsymbol{A}(H_n)=\begin{pmatrix}
 \boldsymbol{A}(H) & \boldsymbol{J}_{10\times t} \\ 
 \boldsymbol{J}_{t \times 10} & \boldsymbol{J}_t-\boldsymbol{I}_t
 \end{pmatrix}.
\end{equation}
Consequently, $\boldsymbol{W} \coloneqq \diag(\boldsymbol{U}, \boldsymbol{I}_t)$ is 
orthogonal and satisfies
\begin{equation}\label{eq:join-similarity}
 \boldsymbol{W}^{\mathsf T} \boldsymbol{A}(G_n) \, \boldsymbol{W} = \boldsymbol{A}(H_n).
\end{equation}
The common degree matrix
\begin{equation}\label{eq:degree-matrix}
 \boldsymbol{D} = \diag \bigl( (t+4) \boldsymbol{I}_{10}, (t+9) \boldsymbol{I}_t \bigr)
\end{equation}
commutes with $\boldsymbol{W}$, as does $\boldsymbol{D}^{-1/2}$. Therefore the same orthogonal
similarity transforms each of
\[
 \boldsymbol{A}(G_n), \qquad \boldsymbol{D}-\boldsymbol{A}(G_n),
 \qquad \boldsymbol{D}+\boldsymbol{A}(G_n),\qquad
 \boldsymbol{I}_n - \boldsymbol{D}^{-1/2} \boldsymbol{A}(G_n) \, \boldsymbol{D}^{-1/2}
\]
into its counterpart for $H_n$. This proves simultaneous cospectrality for the 
adjacency, Laplacian, signless Laplacian, and normalized Laplacian matrices.

\smallskip 
The following lemma follows from the general formula for the Seidel 
characteristic polynomial of a disjoint union of regular graphs given 
in Theorem~1 and Section~4 of \cite{HaemersOboudi2020}.
For completeness, we include a direct proof based on an equitable 
partition. This lemma immediately establishes the Seidel cospectrality of 
\(G_n\) and \(H_n\) for each \(n \geq 11 \).

\begin{lemma}
\label{lemma: Seidel cospectral}
Let \(G\) and \(H\) be regular graphs that are cospectral with respect to
their adjacency matrices. Then, for every integer \(r\geq 1\), the graphs
\(G \vee K_r\) and \(H \vee K_r\) are Seidel cospectral.
\end{lemma}

\begin{proof}
Since \(G\) and \(H\) are regular and adjacency cospectral, they have the
same order \(n\), the same degree \(d\), and the same adjacency spectrum.
Write their common adjacency spectrum as
\[
d=\lambda_1 \geq \lambda_2 \geq \cdots \geq \lambda_n.
\]
For \(G\), choose an orthogonal adjacency eigenbasis
\[
\underline{x}_1,\ldots,\underline{x}_n
\]
such that \(\underline{x}_1=\underline{1}_n\) and
\[
\boldsymbol{A}(G)\underline{x}_i =\lambda_i \underline{x}_i.
\]
In particular,
\[
\underline{x}_i\perp\underline{1}_n,
\qquad 2\leq i\leq n.
\]
This choice is possible even if \(d\) has multiplicity greater than one,
so no connectedness assumption is required.

Recall that the Seidel matrix of a graph \(X\) is
\[
\boldsymbol{S}(X) =\boldsymbol{J}-\boldsymbol{I}-2\boldsymbol{A}(X).
\]
With respect to the partition
\[
V(G\vee K_r) = V(G) \mathbin{\dot\cup} V(K_r),
\]
we have
\[
\boldsymbol{S}(G\vee K_r)=
\begin{pmatrix}
\boldsymbol{J}_n-\boldsymbol{I}_n-2\boldsymbol{A}(G)
    &-\boldsymbol{J}_{n\times r}\\
-\boldsymbol{J}_{r\times n}
    &\boldsymbol{I}_r-\boldsymbol{J}_r
\end{pmatrix}.
\]

For every \(i \in \{2,\ldots,n\}\), the orthogonality relation
\(\underline{x}_i\perp\underline{1}_n\) gives
\(\boldsymbol{J}_n \underline{x}_i = \underline{0}\). Therefore,
\[
\boldsymbol{S}(G\vee K_r)
\begin{pmatrix}
\underline{x}_i\\
\underline{0}
\end{pmatrix}
= (-1-2\lambda_i)
\begin{pmatrix}
\underline{x}_i\\
\underline{0}
\end{pmatrix}.
\]
Hence,
\[
-1-2\lambda_2, \ldots, -1-2\lambda_n
\]
are \(n-1\) Seidel eigenvalues of \(G\vee K_r\), counted according to
the chosen eigenbasis.

Similarly, if \(\underline{y}\in\mathbb{R}^r\) satisfies
\(\underline{y}\perp\underline{1}_r\), then
\(\boldsymbol{J}_r\underline{y}=\underline{0}\), and hence
\[
\boldsymbol{S}(G\vee K_r)
\begin{pmatrix}
\underline{0}\\
\underline{y}
\end{pmatrix}
=
\begin{pmatrix}
\underline{0}\\
\underline{y}
\end{pmatrix}.
\]
Thus, there are \(r-1\) linearly independent eigenvectors corresponding
to the Seidel eigenvalue \(1\).

The preceding eigenvectors span two mutually orthogonal subspaces of
dimensions \(n-1\) and \(r-1\). They therefore account for \(n+r-2\)
eigenvalues of the \((n+r) \times (n+r)\) Seidel matrix. It remains to
determine two eigenvalues, counted with multiplicity.

We use the following standard fact about equitable partitions. Suppose
that the rows and columns of a matrix \(\boldsymbol{M}\) are partitioned
into \(k\) cells and that every block of \(\boldsymbol{M}\) has constant
row sum. Let \(\boldsymbol{Q}\) be the \(k \times k\) matrix whose
\((i,j)\)-entry is the common row sum of the \((i,j)\)-block. If
\(\boldsymbol{R}\) is the characteristic matrix of the partition, then
\[
\boldsymbol{M} \boldsymbol{R}
=\boldsymbol{R} \boldsymbol{Q}.
\]
Thus, \(\operatorname{col}(\boldsymbol{R})\) is invariant under
\(\boldsymbol{M}\), and the restriction of \(\boldsymbol{M}\) to this
subspace is represented by \(\boldsymbol{Q}\). In particular, the
eigenvalues of \(\boldsymbol{Q}\), counted with multiplicity, are
eigenvalues of \(\boldsymbol{M}\).

We apply this fact to \(\boldsymbol{M} = \boldsymbol{S}(G\vee K_r)\).
Since \(G\) is \(d\)-regular, the four blocks of
\(\boldsymbol{S}(G\vee K_r)\) have respective constant row sums
\[
n-1-2d, \qquad -r, \qquad -n,\qquad 1-r.
\]
Thus, the partition is equitable, with quotient matrix
\[
\boldsymbol{Q}
=
\begin{pmatrix}
n-1-2d & -r\\
-n     & 1-r
\end{pmatrix}.
\]
Its characteristic matrix is
\[
\boldsymbol{R}
=
\begin{pmatrix}
\underline{1}_n & \underline{0}_n\\
\underline{0}_r & \underline{1}_r
\end{pmatrix},
\]
and
\[
\boldsymbol{S}(G\vee K_r)\boldsymbol{R}
=\boldsymbol{R}\boldsymbol{Q}.
\]
Equivalently, if
\[
\boldsymbol{Q}
\begin{pmatrix}
a\\ b
\end{pmatrix}
=
\theta
\begin{pmatrix}
a\\ b
\end{pmatrix},
\]
then
\[
\boldsymbol{S}(G\vee K_r)
\begin{pmatrix}
a\underline{1}_n\\
b\underline{1}_r
\end{pmatrix}
=
\theta
\begin{pmatrix}
a\underline{1}_n\\
b\underline{1}_r
\end{pmatrix}.
\]

The column space of \(\boldsymbol{R}\) is the two-dimensional subspace
\[
\mathcal{W}
=
\left\{
\begin{pmatrix}
a\underline{1}_n\\
b\underline{1}_r
\end{pmatrix}
:a,b\in\mathbb{R}
\right\}.
\]
It is orthogonal to the two previously constructed subspaces. Since the
three subspaces have total dimension
\[
(n-1)+(r-1)+2=n+r,
\]
the two eigenvalues of \(\boldsymbol{Q}\) are precisely the two remaining
Seidel eigenvalues of \(G\vee K_r\). They are the roots of
\[
\det(x\boldsymbol{I}_2-\boldsymbol{Q})
=
\bigl(x-(n-1-2d)\bigr)\bigl(x-(1-r)\bigr)-nr.
\]

It follows that the Seidel spectrum of \(G\vee K_r\) is determined
entirely by \(n\), \(d\), \(r\), and the adjacency spectrum of \(G\).
These data are the same for \(G\) and \(H\). Therefore,
\(G \vee K_r\) and \(H \vee K_r\) are Seidel cospectral.
\end{proof}

\begin{remark}
Lemma~\ref{lemma: Seidel cospectral} does not appear to have been stated explicitly 
in this particular form in~\cite{HaemersOboudi2020}. Nevertheless, it follows directly 
from the results therein. Indeed,
\[
\overline{G \vee K_r}
= \overline{G} \cup \overline{K_r}
= \overline{G}\cup (rK_1).
\]
Because \(G\) and \(H\) are regular and adjacency cospectral, their
complements \(\overline{G}\) and \(\overline{H}\) are also regular and
adjacency cospectral. Applying the Seidel characteristic-polynomial formula
of Haemers and Oboudi to \( \overline{G}\cup (rK_1) \) and  
\( \overline{H}\cup (rK_1) \) shows that these two disjoint unions are 
Seidel cospectral. The conclusion then follows from the identity 
\[
\boldsymbol{S}(\overline{X})=-\boldsymbol{S}(X).
\]
Thus, Lemma~\ref{lemma: Seidel cospectral} is a consequence of
Theorem~1 and Section~4 of \cite{HaemersOboudi2020}.
\end{remark}

An independent set in a join lies entirely in one of its two parts.
Maximum cliques combine across the parts, and any proper coloring uses
disjoint color sets on them. Therefore, by \eqref{eq:seed-alpha-omega}
and \eqref{eq:seed-chi},
\begin{align}
 \alpha(G_n)=\alpha(H_n)=3, \qquad \omega(G_n)=\omega(H_n)=t+3, 
 \qquad \chi(G_n)=\chi(H_n)=t+4.
\end{align}
Substituting $t=n-10$ gives 
\begin{align}
 \omega(G_n)=\omega(H_n)=n-7, \qquad \chi(G_n)=\chi(H_n)=n-6.
\end{align}

Complementation turns the joins in \eqref{eq:join-construction} into
disjoint unions:
\[
 \overline{G_n}=\overline G\sqcup\overline{K_t},\qquad
 \overline{H_n}=\overline H\sqcup\overline{K_t},
\]
where $\sqcup$ denotes disjoint union of graphs, and $\overline{K_t}$ 
consists of $t$ isolated vertices. Adding isolated vertices does not 
change the chromatic number of either seed complement. Thus
\eqref{eq:seed-chi-complement} yields 
\[
 \chi(\overline{G_n})=\chi(\overline{H_n})=4.
\]

The following lemma serves as a preparatory step towards determining the 
maximum-cut of the join graphs \( G_n \) and \( H_n \). 
It is a direct consequence of the decomposition of cuts under the graph join. 
It can also be obtained by specializing the join formula for the bivariate 
Ising polynomial given in Theorem~3.2 of \cite{AndrenMarkstrom2009}. 
Nevertheless, we provide a simple self-contained proof.
\begin{lemma}
\label{lemma: max-cut of F join Kr}
Let \(F\) be a finite simple graph of order \(n\), and let \(r\geq 1\).
Then
\begin{align}
\label{eq: max-cut of F join Kr}
\mathrm{mc}(F\vee K_r)
= \max_{0\leq k\leq n}
\left\{ \mathrm{mc}_k(F)+kr+ \max_{0\leq j\leq r}j(n+r-2k-j) \right\}.
\end{align}
\end{lemma}

\begin{proof}
Consider a cut of \(F\vee K_r\), and let \(S\) be one of its two vertex
classes. Let 
\[
A \coloneqq S\cap V(F), \qquad B \coloneqq S\cap V(K_r),
\]
and set \( |A|=k \) and \( |B|=j \).
The edges crossing the cut can be divided into three classes.
\begin{enumerate}
\item 
The number of crossing edges belonging to \(F\) is
\[
e_F(A,V(F)\setminus A)\leq \mathrm{mc}_k(F).
\]
\item Since \(K_r\) is complete, the number of its edges crossing the
cut is \( j(r-j) \).
\item The crossing edges between \(F\) and \(K_r\) are precisely
those joining \(A\) to \(V(K_r)\setminus B\), together with those
joining \(V(F)\setminus A\) to \(B\). Their number is
\( k(r-j)+(n-k)j \).
\end{enumerate}
Thus, every cut for which \(|A|=k\) and \(|B|=j\) has size at most
\[
\mathrm{mc}_k(F)+j(r-j)+k(r-j)+(n-k)j.
\]

Conversely, for every \(k\), choose a set \(A\subseteq V(F)\) of
cardinality \(k\) attaining \(\mathrm{mc}_k(F)\), and choose any
\(j\)-element set \(B\subseteq V(K_r)\). This shows that the above 
upper bound on the maximum-cut is tight. The cut whose one vertex class
is \(A \dot\cup B\) has exactly
\begin{align*}
& \mathrm{mc}_k(F)+j(r-j)+k(r-j)+(n-k)j \\
&= \mathrm{mc}_k(F)+ kr+j(n+r-2k-j)
\end{align*}
crossing edges. Maximizing over \( k \in \{0, 1, \ldots, n\} \) and 
\( j \in \{0, 1, \ldots, r\} \) proves \eqref{eq: max-cut of F join Kr}.
\end{proof}

\begin{proposition}
\label{proposition: max-cuts for n at least 11}
Let \(G\) and \(H\) be the two graphs of order \(10\) in Figure~\ref{fig:G_and_H_regular_graphs}.
For every \(n\geq 11\), let
\begin{align}
\label{eq1: 17.09.26}
G_n=G\vee K_{n-10},  \qquad  H_n=H\vee K_{n-10},
\end{align}
and
\begin{align}
\label{eq2: 17.09.26}
\widetilde{G}_n=\overline{G}\vee K_{n-10},
\qquad
\widetilde{H}_n=\overline{H}\vee K_{n-10}.
\end{align}
Then
\begin{align}
\label{eq3: 17.09.26}
\mathrm{mc}(G_n)=\mathrm{mc}(H_n) =
\begin{cases}
\displaystyle \left\lfloor\frac{n^2}{4}\right\rfloor-9,
   & 11\leq n\leq 14,\\[6mm]
10(n-10),
   & 15\leq n\leq 18,\\[3mm]
\displaystyle \left\lfloor\frac{n^2}{4}\right\rfloor,
   & n\geq 19,
\end{cases}
\end{align}
and
\begin{align}
\label{eq4: 17.09.26}
\mathrm{mc}(\widetilde{G}_n) =
\mathrm{mc}(\widetilde{H}_n) =
\begin{cases}
\displaystyle \left\lfloor\frac{n^2}{4}\right\rfloor-6,
   & 11\leq n\leq 15,\\[6mm]
10(n-10),
   & 16\leq n\leq 18,\\[3mm]
\displaystyle \left\lfloor\frac{n^2}{4}\right\rfloor,
   & n\geq 19.
\end{cases}
\end{align}
Consequently, 
\begin{align}
\label{eq5: 17.09.26}
\mathrm{mc}(G_n)
= \mathrm{mc}(H_n)
= \mathrm{mc}(\widetilde{G}_n)
= \mathrm{mc}(\widetilde{H}_n)
= \mathrm{mc}(K_n)
\end{align}
for every \(n\geq 19\).
\end{proposition}

\begin{remark}
\label{remark: difference formulas for max-cut}
In comparison to Proposition~\ref{proposition: max-cuts for n at least 11},
the maximum-cut of the complete graph $K_n$ is given by 
\begin{align}
\label{eq6: 17.09.26}
\mathrm{mc}(K_n)=\left\lfloor\frac{n^2}{4}\right\rfloor \qquad (n\geq 1).
\end{align}
Consequently,
\begin{align}
\label{eq7: 17.09.26}
\mathrm{mc}(K_n)-\mathrm{mc}(G_n) = \mathrm{mc}(K_n)-\mathrm{mc}(H_n) =
\begin{cases}
9, & 11\leq n\leq 14,\\[1mm]
\displaystyle
\left\lfloor\frac{n^2}{4}\right\rfloor-10(n-10),
   & 15\leq n\leq 18,\\[3mm]
0, & n\geq 19,
\end{cases}
\end{align}
and
\begin{align}
\label{eq8: 17.09.26}
\mathrm{mc}(K_n)-\mathrm{mc}(\widetilde{G}_n) = \mathrm{mc}(K_n)-\mathrm{mc}(\widetilde{H}_n) =
\begin{cases}
6, & 11\leq n\leq 15,\\[3mm]
\displaystyle
\left\lfloor\frac{n^2}{4}\right\rfloor-10(n-10),
   & 16\leq n\leq 18,\\[4mm]
0, & n\geq 19.
\end{cases}
\end{align}
\end{remark}

\begin{proof}
Set \( r=n-10 \). Since \(n\geq 11\), we have \(r\geq 1\). By
Lemma~\ref{lemma: max-cut of F join Kr}, for every graph \(X\) 
of order~10,
\begin{align}
\label{eq9: 17.09.26}
\mathrm{mc}(X \vee K_r)
= \max_{0\leq k\leq 10}
\left\{ \mathrm{mc}_k(X)+kr+ \max_{0\leq j\leq r} j(10+r-2k-j) \right\}.
\end{align}
Since, by symmetry, 
\[
\mathrm{mc}_k(X)=\mathrm{mc}_{10-k}(X),
\]
we may restrict the maximization to \(0\leq k\leq 5\). 
The quadratic function
\[
j\longmapsto j(r+10-2k-j)
\]
is concave. Its maximum over the integers \(0\leq j\leq r\) is attained
at the integer or integers closest to \( \tfrac12 (r+10-2k) \), 
unless this number is larger than \(r\), in which case the maximum is
attained at \(j=r\). Thus, for \(0\leq k\leq 5\), define
\begin{align}
\label{eq10: 17.09.26}
F_k^{X}(r)=
\begin{cases}
\mathrm{mc}_k(X)+r(10-k), & r\leq 10-2k,\\[2mm]
\displaystyle
\mathrm{mc}_k(X)+kr+\left\lfloor\frac{(r+10-2k)^2}{4}\right\rfloor, & r\geq 10-2k.
\end{cases}
\end{align}
Then
\begin{align}
\label{eq11: 17.09.26}
\mathrm{mc}(X\vee K_r)=\max_{0\leq k\leq 5} F_k^{X}(r).
\end{align}
We first apply \eqref{eq11: 17.09.26} to \(X\in\{G,H\}\). By
Proposition~\ref{proposition: max-cut of regular G, H and complements},
\begin{align}
\label{eq12: 17.09.26}
\bigl(\mathrm{mc}_k(X)\bigr)_{k=0}^{5} = (0,4,8,12,14,16).
\end{align}
In particular, \(G\) and \(H\) have the same cardinality-constrained
cut profile, and hence
\begin{align}
\label{eq13: 17.09.26}
\mathrm{mc}(G_n)=\mathrm{mc}(H_n).
\end{align}
Substituting \eqref{eq12: 17.09.26} into \eqref{eq11: 17.09.26} and comparing
\(F_0^{X}(r),\ldots,F_5^{X}(r)\) gives
\[
\max_{0\leq k\leq 5} F_k^{X}(r) =
\begin{cases}
\displaystyle
\left\lfloor\frac{(r+10)^2}{4}\right\rfloor-9, & 1\leq r\leq 4, \\[5mm]
10r, & 5\leq r\leq 8,\\[2mm]
\displaystyle
\left\lfloor\frac{(r+10)^2}{4}\right\rfloor, & r\geq 9,
\end{cases}
\]
where the first line on the right-hand side holds by the equality 
\(
16+5r+\left\lfloor\frac{r^2}{4}\right\rfloor
= \left\lfloor\frac{(r+10)^2}{4}\right\rfloor-9.
\)
Since \(n=r+10\), this proves \eqref{eq3: 17.09.26}.

\noindent 
We next apply the same argument to
\(X\in\{\overline{G},\overline{H}\}\). By
Proposition~\ref{proposition: max-cut of regular G, H and complements},
\begin{align}
\label{eq14: 17.09.26}
\bigl(\mathrm{mc}_k(X)\bigr)_{k=0}^{5} = (0,5,10,15,18,19).
\end{align}
Thus, \(\overline{G}\) and \(\overline{H}\) also have the same
cardinality-constrained cut profile, so
\begin{align}
\label{eq15: 17.09.26}
\mathrm{mc}(\widetilde{G}_n) = \mathrm{mc}(\widetilde{H}_n).
\end{align}
Substituting these values and comparing
\(F_0^{X}(r),\ldots,F_5^{X}(r)\) yields
\[
\max_{0\leq k\leq 5}F_k^{X}(r) =
\begin{cases}
\displaystyle
19+5r+\left\lfloor\frac{r^2}{4}\right\rfloor,
   & 1\leq r\leq 5,\\[3mm]
10r, & 6\leq r\leq 8,\\[1mm]
\displaystyle
\left\lfloor\frac{(r+10)^2}{4}\right\rfloor,
   & r\geq 9,
\end{cases}
\]
where the first line on the right-hand side holds by the equality
\( 19+5r+\left\lfloor\frac{r^2}{4}\right\rfloor
= \left\lfloor\frac{(r+10)^2}{4}\right\rfloor-6 \).
This proves \eqref{eq4: 17.09.26} by using the equality \( n = r + 10 \).

Finally, a cut of \(K_n\) whose two vertex classes have cardinalities
\(s\) and \(n-s\) has \(s(n-s)\) crossing edges, where \( s \in \{0, 1, \ldots, n\} \). 
Therefore,
\[
\mathrm{mc}(K_n) = \max_{0\leq s\leq n} \bigl\{ s(n-s) \bigr\} 
= \left\lfloor\frac{n^2}{4}\right\rfloor.
\]
Equality \eqref{eq5: 17.09.26} for \(n\geq 19\) and the asserted difference 
formulas in Remark~\ref{remark: difference formulas for max-cut} now follow immediately.
\end{proof}

The following lemma specializes a result from Section~18 of \cite{Knuth94} for our purposes. 
\begin{lemma}
\label{lemma: Lovasz theta for F join Kt}
For every finite simple graph \(F\) and every
integer \(t \geq 1\),
\begin{equation}
\label{eq:theta-join}
    \vartheta(F \vee K_t) = \vartheta(F).
\end{equation}
\end{lemma}
\begin{proof}
The Lov\'{a}sz theta function satisfies (see Section~18 of \cite{Knuth94})
\begin{align}
\label{eq: Knuth - join of graphs}
\vartheta(F_1 \vee F_2)=\max\{\vartheta(F_1), \vartheta(F_2)\},
\end{align}
for arbitrary finite simple graphs \( F_1 \) and \( F_2 \). Set \( F_1 \coloneqq F \)
and \( F_2 \coloneqq K_t \). Since \(\vartheta(K_t)=1\) and
\(\vartheta(F) \geq \alpha(F) \geq 1\), equality \eqref{eq:theta-join} follows
from \eqref{eq: Knuth - join of graphs}. 
\end{proof}

Applying \eqref{eq:theta-join} to \eqref{eq:join-construction} and combining 
the result with Propositions~\ref{proposition: Lovasz number of G} 
and~\ref{proposition: Lovasz number of H} yields 
\begin{align} \label{eq:separation-final}
\begin{aligned}
\vartheta(G_n) = \vartheta(G) = 3.23606797749979\ldots,  \quad 
\vartheta(H_n) = \vartheta(H) = 3.26880089469821\ldots.
\end{aligned}
\end{align}
Since the Lov\'{a}sz number is invariant under graph isomorphism and the Lov\'{a}sz numbers
of $G_n$ and $H_n$ are distinct, it follows that these graphs are nonisomorphic. 
\end{proof}

\subsection{Further existence and nonexistence results}
\label{subsection: Further existence and nonexistence results}

We next present a computational result that provides the required
lower bound on the order of such pairs.
\begin{theorem} \label{theorem: graphs of order at most 9}
There is no pair of connected, irregular, nonisomorphic graphs on at
most nine vertices that are simultaneously cospectral with respect to
the adjacency, Laplacian, signless Laplacian, and normalized Laplacian
matrices, and have identical independence, clique, chromatic, and
complement chromatic numbers.
\end{theorem}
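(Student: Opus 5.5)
The proof is an exhaustive computer search, organized so that each stage is a finite, exact, and easily audited computation. For each order \(n\in\{1,\dots,9\}\), I will generate all connected graphs up to isomorphism with SageMath's \texttt{graphs.nauty\_geng(n, "-c")}. There are 261080 connected graphs on nine vertices, and fewer for smaller orders. I will then discard the regular graphs. Pairs of graphs of different orders cannot be cospectral, so the search can be carried out separately for each \(n\). Since every generated graph is a canonical representative of its isomorphism class, any two distinct graphs in the list are nonisomorphic.

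The search first uses the spectra as cheap keys. For each remaining graph, I will compute the characteristic polynomials of \(\boldsymbol{A}\), \(\boldsymbol{L}\), and \(\boldsymbol{Q}\) exactly over \(\mathbb{Z}\). For the normalized Laplacian, the entries of \(\boldsymbol{D}^{-1/2}\boldsymbol{A}\boldsymbol{D}^{-1/2}\) are irrational, so I will instead use the similar rational matrix \(\boldsymbol{I}_n-\boldsymbol{D}^{-1}\boldsymbol{A}\). Its characteristic polynomial has rational coefficients and equals that of \(\boldsymbol{\mathcal{L}}\), which is legitimate because connectedness on \(n\ge 2\) vertices rules out isolated vertices.

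Next, I will group the graphs into buckets keyed by the tuple of four polynomials, using exact comparison so that no floating-point tolerance enters the argument. Only buckets of size at least two can contain a counterexample. As a fallback, I could filter first by the adjacency polynomial alone and compute the others only inside the surviving buckets.

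Within each surviving bucket, I will compute \(\alpha\), \(\omega\), \(\chi\), and \(\chi\) of the complement using Sage's exact routines (\texttt{independent\_set}, \texttt{clique\_number}, \texttt{chromatic\_number}). On at most nine vertices this is fast. I will then check whether any two graphs in a bucket agree on all four integers. The theorem asserts that this never happens. The output of the script should therefore be, for each \(n\le 9\), the number of four-spectra-cospectral pairs together with confirmation that each such pair differs in at least one of the four invariants.

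I expect the main difficulty to be making the computation trustworthy rather than making it feasible. Three points need care: the exactness of the normalized-Laplacian comparison, handled by the rational similarity above; the completeness of the enumeration, handled by checking graph counts against OEIS A001349; and the correct handling of the regularity filter, since a pair with one regular and one irregular graph is excluded by the hypothesis anyway. As a sanity check, I will rerun the script at \(n=10\) and confirm that it finds a pair, consistent with the order-10 example announced in the introduction.
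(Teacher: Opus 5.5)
Your proposal is correct and follows the paper's proof: an exhaustive, exact SageMath enumeration of the connected irregular graphs of each order $m\le 9$. Each graph is compared on the four characteristic polynomials, with $\boldsymbol{I}-\boldsymbol{D}^{-1}\boldsymbol{A}$ as the rational stand-in for the normalized Laplacian, together with $\alpha$, $\omega$, $\chi$, and $\chi(\overline{X})$. Your use of \texttt{nauty\_geng} with the connectivity flag and of spectral bucketing before computing the integer invariants only makes the same computation more efficient; the paper reports that this computation finds no repeated signature for any $m\le 9$.
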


\begin{proof}
For a graph \(X\) on \(n\) vertices, let
\[
\boldsymbol{A}(X), \qquad \boldsymbol{L}(X) = \boldsymbol{D}(X)-\boldsymbol{A}(X), \qquad \boldsymbol{Q}(X) = \boldsymbol{D}(X)+\boldsymbol{A}(X)
\]
denote its adjacency, Laplacian, and signless Laplacian matrices,
respectively. Its normalized Laplacian matrix is
\[
\boldsymbol{\mathcal{L}}(X) = \boldsymbol{I}_n - \boldsymbol{D}(X)^{-1/2} \, \boldsymbol{A}(X) \, \boldsymbol{D}(X)^{-1/2}.
\]

If \(X\) is connected and has at least two vertices, then \(\boldsymbol{D}(X)\) is
invertible, and \(\boldsymbol{\mathcal{L}}(X)\) is similar to the rational matrix
\( \boldsymbol{I}_n-\boldsymbol{D}(X)^{-1} \boldsymbol{A}(X) \) since 
\[
\boldsymbol{D}(X)^{-1/2} \, \boldsymbol{\mathcal{L}}(X) \, \boldsymbol{D}(X)^{1/2} 
= \boldsymbol{I}_n - \boldsymbol{D}(X)^{-1} \, \boldsymbol{A}(X).
\]
Consequently, the characteristic polynomial of the normalized
Laplacian can be computed exactly over \(\mathbb{Q}\), without using
numerical approximations.

We use SageMath~9.3 to enumerate, for every \(1\leq m\leq 9\), one
representative from each isomorphism class of graphs of order \(m\),
retaining only the connected and irregular graphs. For every retained
graph \(X\), we compute the augmented signature
\[
\Sigma(X) \coloneqq \bigl(
\phi_{\boldsymbol{A}(X)}(x), \, 
\phi_{\boldsymbol{L}(X)}(x), \, 
\phi_{\boldsymbol{Q}(X)}(x), \, 
\phi_{\boldsymbol{\mathcal{L}}(X)}(x),
\alpha(X), \, 
\omega(X), \, 
\chi(X), \,  
\chi(\overline{X}) \bigr),
\]
where \( \phi_M(x) \coloneqq \det(xI-M) \) denotes the characteristic 
polynomial of a square matrix \( M \).

The following SageMath 9.3 code performs the exhaustive computation
using exact arithmetic.

\begin{verbatim}
def spectral_signature(X):
    m = X.order()
    A = X.adjacency_matrix().change_ring(QQ)
    D = diagonal_matrix(QQ, X.degree())
    L = D - A
    Q = D + A

    # This rational matrix is similar to the normalized Laplacian matrix.
    NL = identity_matrix(QQ, m) - D.inverse() * A

    return (
        tuple(A.charpoly().list()),
        tuple(L.charpoly().list()),
        tuple(Q.charpoly().list()),
        tuple(NL.charpoly().list()),
        len(X.independent_set()),  # alpha(X)
        X.clique_number(),         # omega(X)
        X.chromatic_number(),
        X.complement().chromatic_number()
    )

for m in range(1, 10):
    signatures = {}
    repeated_signatures = []

    for X in graphs(m):
        if not X.is_connected():
            continue

        if X.is_regular():
            continue

        signature = spectral_signature(X)

        if signature in signatures:
            repeated_signatures.append(
                (signatures[signature], X.canonical_label())
            )
        else:
            signatures[signature] = X.canonical_label()

    print(m, len(repeated_signatures))
\end{verbatim}

The output is
\[
1 \; 0  \qquad 2 \; 0 \qquad 3 \; 0 \qquad 
4 \; 0  \qquad 5 \; 0 \qquad 6 \; 0 \qquad 
7 \; 0  \qquad 8 \; 0 \qquad 9 \; 0.
\]

For each \(m \in [9]\), the value \(0\) that appears as the second coordinate in 
each pair \( (m \; 0) \) means that no two connected, irregular, nonisomorphic 
graphs of order \(m\) have the same augmented signature \(\Sigma\). Equality 
of the first four components of this signature is equivalent to simultaneous 
cospectrality with respect to the adjacency, Laplacian, signless Laplacian, 
and normalized Laplacian matrices, while equality of the remaining four components 
means that the graphs have identical independence, clique, chromatic, and
complement chromatic numbers.

Since the enumeration contains one representative from every isomorphism class of 
graphs of each order \(m\leq 9\), the computation is exhaustive and proves the assertion.
\end{proof}

Combining the nonexistence result of Theorem~\ref{theorem: graphs of order at most 9} 
with the construction for order \(10\) given below and the all-orders construction of
Theorem~\ref{theorem: all-orders} for \(n\geq 11\) yields the following sharp result.
\begin{theorem} 
\label{theorem: minimum-order}
Let \(n_{\min}\) denote the smallest order of a pair of connected,
irregular, nonisomorphic graphs that are cospectral with respect to the
adjacency, Laplacian, signless Laplacian, normalized Laplacian, and Seidel
matrices, have the same independence number, clique number, chromatic
number, maximum-cut number, and complement chromatic number, 
but have distinct Lov\'{a}sz \(\vartheta\)-numbers. Then \( n_{\min}=10\), 
and such a pair exists for every integer \(n \geq n_{\min}\).
\end{theorem}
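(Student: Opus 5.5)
The argument has three parts: an existence construction for \(n\geq 11\), a nonexistence result for \(n\leq 9\), and a new construction at order \(10\).

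For \(n\geq 11\), existence is exactly Theorem~\ref{theorem: all-orders}. The pair \(G_n=G\vee K_{n-10}\) and \(H_n=H\vee K_{n-10}\) is connected, irregular, and cospectral for all five matrices. It has equal \(\alpha,\omega,\chi,\chi(\overline{\,\cdot\,})\) and, by Proposition~\ref{proposition: max-cuts for n at least 11}, equal maximum-cut numbers. By \eqref{eq:separation-final}, \(\vartheta(G_n)\neq\vartheta(H_n)\). So this case needs no further work.

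For the lower bound \(n_{\min}\geq 10\), I apply Theorem~\ref{theorem: graphs of order at most 9}. Any pair with all the listed properties on \(m\leq 9\) vertices would in particular be connected, irregular, nonisomorphic, cospectral for the first four matrices, and share \(\alpha,\omega,\chi,\chi(\overline{\,\cdot\,})\). The exhaustive computation rules this out. The remaining requirements (Seidel cospectrality, equal maximum-cut number, distinct \(\vartheta\)) only strengthen the hypotheses, so no such pair exists on at most nine vertices.

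The substantive step, and the main obstacle, is exhibiting a connected irregular pair at order exactly \(10\). The join construction gives order \(\geq 11\), and the regular pair \(G,H\) is not irregular. My first attempt is an explicit search, in the same SageMath framework as Theorem~\ref{theorem: graphs of order at most 9}, over connected irregular graphs of order \(10\). I would bucket them by the augmented signature \(\Sigma\), extended with the Seidel characteristic polynomial and \(\mathrm{mc}\). Within each collision class I would compute \(\vartheta\) numerically and keep a pair with clearly separated values.

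Having found such a pair \(X,Y\), I would certify its properties rigorously rather than numerically:
\begin{itemize}
\item Cospectrality: compare exact characteristic polynomials over \(\mathbb{Q}\), using the similarity \(\boldsymbol{D}^{-1/2}\boldsymbol{\mathcal{L}}\boldsymbol{D}^{1/2}=\boldsymbol{I}-\boldsymbol{D}^{-1}\boldsymbol{A}\) for the normalized Laplacian.
\item Integer invariants: give explicit witnesses (independent sets, cliques, colorings, clique partitions of the complement, cuts), each matched with a counting argument as in Proposition~\ref{proposition: max-cut of regular G, H and complements}.
\item Distinct \(\vartheta\): separate the values as in Propositions~\ref{proposition: Lovasz number of G} and~\ref{proposition: Lovasz number of H}. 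Take a feasible Gram family for the larger one, and a dual certificate via Lemma~\ref{lemma: dual-certificate criterion}, or the partition inequality of Lemma~\ref{lemma: partition inequality} combined with twin reduction (Lemma~\ref{lemma: adjacent true twins}), for the smaller one.
\end{itemize}
The separating certificates are where I expect the real effort, since irregular graphs at order \(10\) lack the symmetry that made the seed computation tractable.

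If a direct search is awkward, a fallback is to modify \(G,H\) by a symmetric operation that breaks regularity but keeps cospectrality, such as a Godsil--McKay switching compatible with both graphs, and then recheck \(\vartheta\).

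Combining the three parts gives \(n_{\min}=10\) and existence for every \(n\geq 10\).
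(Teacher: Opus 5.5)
Your proposal follows the paper's proof. The lower bound $n_{\min}\geq 10$ comes from Theorem~\ref{theorem: graphs of order at most 9}, and the orders $n\geq 11$ come from Theorem~\ref{theorem: all-orders}. Order $10$ is settled by a SageMath search, which yields the explicit irregular pair of Figure~\ref{fig:G_and_H_irregular_graphs}: common degree sequence $(5,5,4,3,3,3,3,2,2,2)$, with $H$ obtained from $G$ by replacing the edge $\{2,9\}$ by $\{5,9\}$.

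The paper separates the two $\vartheta$-numbers at order $10$ only numerically ($\approx 4.2361$ versus $\approx 4.3017$). So the exact primal and dual certificates you plan would strengthen that step, not replace it.
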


\begin{proof}
Theorem~\ref{theorem: graphs of order at most 9} asserts that no such pair 
exists on fewer than ten vertices, and hence \[ n_{\min}\geq 10. \]
On the other hand, a computer search produces a pair of connected,
irregular graphs on ten vertices satisfying all the stated
cospectrality and combinatorial-invariant conditions and having
distinct Lov\'asz \(\vartheta\)-numbers. One such pair, obtained by a similar 
SageMath computation \cite{SageMath}, is presented in 
Figure~\ref{fig:G_and_H_irregular_graphs}.

\begin{figure}[htbp]
    \centering
    \includegraphics[width=0.89\linewidth]%
        {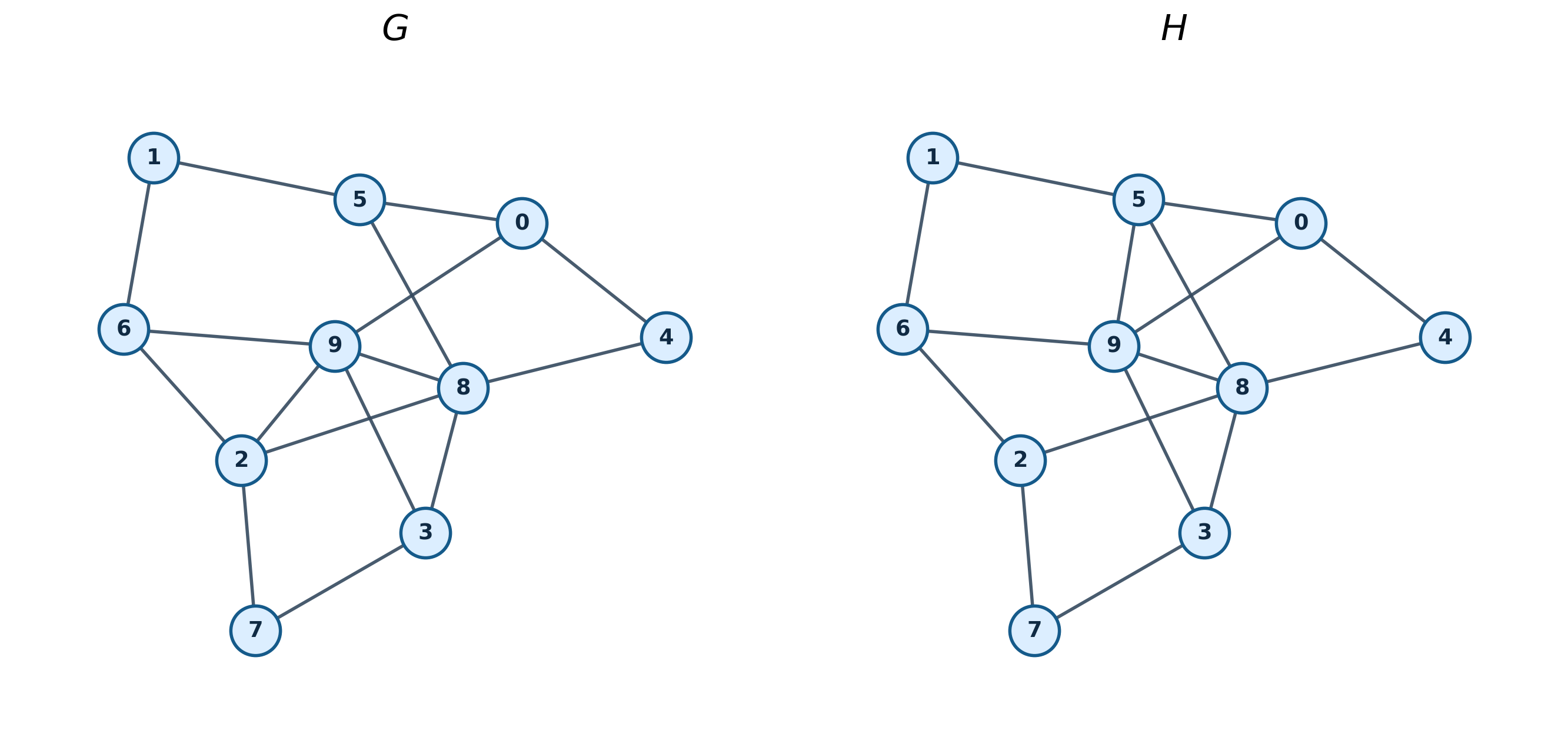}
    \caption{A pair of connected, irregular, nonisomorphic graphs on
    ten vertices satisfying the cospectrality and
    combinatorial-invariant conditions of
    Theorem~\ref{theorem: minimum-order}, while having distinct
    Lov\'{a}sz \(\vartheta\)-numbers.}
    \label{fig:G_and_H_irregular_graphs}
\end{figure}

The graphs \(G\) and \(H\) shown in Figure~\ref{fig:G_and_H_irregular_graphs} 
are connected, nonisomorphic, and irregular graphs on~10 vertices, and they 
have the common degree sequence
\[
    (5,5,4,3,3,3,3,2,2,2).
\]
Note that $H$ is obtained from \(G\) by deleting the edge \(\{2,9\}\)
and adding the edge \(\{5,9\}\). The nonisomorphism of the graphs \(G\) and \(H\) 
is justified as follows.  
\begin{itemize}
\item Each graph has a unique vertex of degree \(4\).
\item In \(G\), this vertex is \(2\). Its unique neighbor of degree \(3\) is
vertex \(6\), whose unique neighbor of degree \(2\) is vertex \(1\).
The other neighbor of vertex \(1\) has degree \(3\).

\item In \(H\), the unique vertex of degree \(4\) is vertex \(5\). Its unique
neighbor of degree \(3\) is vertex~\(0\), whose unique neighbor of
degree \(2\) is vertex \(4\). However, the other neighbor of vertex
\(4\) has degree \(5\).
\end{itemize}
Since an isomorphism preserves adjacency and vertex degrees, this
degree-defined local property must also be preserved. It differs
between the two graphs, and therefore \( G \not\cong H \).

The common adjacency characteristic polynomial of $G$ and $H$ is
\[
\chi_{\boldsymbol{A}}(x)
 =x^{10}-16x^8-6x^7+65x^6+14x^5-86x^4+6x^3+14x^2,
\]
their common Laplacian characteristic polynomial is
\[
\begin{aligned}
\chi_{\boldsymbol{L}}(x)
 ={}&x^{10}-32x^9+439x^8-3382x^7+16089x^6-48888x^5 \\
   &+94580x^4-111898x^3+73152x^2-20080x,
\end{aligned}
\]
their common signless-Laplacian characteristic polynomial is
\[
\begin{aligned}
\chi_{\boldsymbol{Q}}(x)
 ={}&x^{10}-32x^9+439x^8-3394x^7+16317x^6-50696x^5 \\
   &+102344x^4-131402x^3+101776x^2-42704x+7360,
\end{aligned}
\]
their common normalized-Laplacian characteristic polynomial is
\[
\begin{aligned}
\chi_{\boldsymbol{\mathcal{L}}}(x)
 ={}&x^{10}-10x^9+\tfrac{78193}{1800}x^8
     -\tfrac{967}{9}x^7+\tfrac{899119}{5400}x^6
     -\tfrac{18077}{108}x^5 \\
   &+\tfrac{441239}{4050}x^4-\tfrac{29824}{675}x^3
     +\tfrac{2729}{270}x^2-\tfrac{2008}{2025}x,
\end{aligned}
\]
and their common Seidel characteristic polynomial is
\[
\begin{aligned}
\chi_{\boldsymbol{S}}(x) 
 ={}&x^{10} - 45x^8 - 8x^7 + 706x^6 + 312x^5 - 4514x^4\\
     &- 3672x^3 + 9389x^2 + 12584x + 3679.
\end{aligned}
\]     
Moreover, both graphs have independence number~\(4\), clique number~\(3\), 
chromatic number~\(3\), and complement chromatic number~\(5\). We next determine 
the cardinality-constrained maximum-cut profiles of the two graphs and show that 
they are identical. 

\begin{proposition}
\label{proposition: irregular graphs - 10 vertices}
Let \(G\) and \(H\) be the irregular, cospectral, and nonisomorphic graphs of order~\(10\) 
defined in Figure~\ref{fig:G_and_H_irregular_graphs}. Then
\[
\mathrm{mc}_k(G)=\mathrm{mc}_k(H)
\qquad (0\leq k\leq 10),
\]
and their common cardinality-constrained maximum-cut profile is
\[
\begin{array}{c|ccccccccccc}
k
& 0 & 1 & 2 & 3 & 4 & 5 & 6 & 7 & 8 & 9 & 10\\
\hline
\mathrm{mc}_k(G)
& 0 & 5 & 8 & 11 & 13 & 13 & 13 & 11 & 8 & 5 & 0\\
\mathrm{mc}_k(H)
& 0 & 5 & 8 & 11 & 13 & 13 & 13 & 11 & 8 & 5 & \; 0.
\end{array}
\]
In particular,
\[
\mathrm{mc}(G)=\mathrm{mc}(H)=13.
\]
\end{proposition}

\begin{proof}
For a graph \(X\in\{G,H\}\) and a set \(S\subseteq V(X)\), let
\[
\delta_X(S)=e_X(S,V(X)\setminus S).
\]
By definition,
\[
\mathrm{mc}_k(X)
=
\max_{\substack{S\subseteq V(X)\\ |S|=k}}
\delta_X(S).
\]

It is enough to consider \(0\leq k\leq 5\), since a set and its
complement determine the same cut. Thus,
\begin{align}
\label{eq: sym}
\mathrm{mc}_k(X)=\mathrm{mc}_{10-k}(X).
\end{align}

Inspection of the edge lists gives the following maxima, together with
subsets attaining them:
\[
\begin{array}{c|c|c|c}
k&
\text{Upper bound for }\delta_X(S)&
\text{A maximizing set in }G&
\text{A maximizing set in }H\\
\hline
0&0&\varnothing&\varnothing\\
1&5&\{8\}&\{8\}\\
2&8&\{0,8\}&\{0,8\}\\
3&11&\{0,6,8\}&\{0,6,8\}\\
4&13&\{0,6,7,8\}&\{0,6,7,8\}\\
5&13&\{0,1,2,3,8\}&\{0,1,6,7,8\}.
\end{array}
\]
For \(X\in\{G,H\}\), these values are obtained by checking all
\(\binom{10}{k}\) subsets of cardinality \(k\), for \(0\leq k\leq 5\).
Hence
\[
\bigl(\mathrm{mc}_k(G)\bigr)_{k=0}^{5}
= \bigl(\mathrm{mc}_k(H)\bigr)_{k=0}^{5}
= (0,5,8,11,13,13).
\]
By \eqref{eq: sym}, it follows that 
\[
\bigl(\mathrm{mc}_k(G)\bigr)_{k=0}^{10}
= \bigl(\mathrm{mc}_k(H)\bigr)_{k=0}^{10}
= (0,5,8,11,13,13,13,11,8,5,0).
\]
Finally, taking the maximum over \(0\leq k\leq10\) yields
\[
\mathrm{mc}(G)=\mathrm{mc}(H)=13.
\]
\end{proof}

A numerical computation based on the semidefinite characterization in
\eqref{eq:theta-sdp} shows that the Lov\'{a}sz \(\vartheta\)-numbers of
\(G\) and \(H\) are distinct. Their approximate values are
\begin{equation}
\vartheta(G) \approx 4.23606798,
\qquad
\vartheta(H) \approx 4.30169434.
\end{equation}
In particular, \(G\) and \(H\) are nonisomorphic. Thus, the pair
satisfies all the required conditions on ten vertices, which gives
\[
    n_{\min}\leq 10.
\]
Combining the two bounds gives \(n_{\min}=10\).
Finally, the exhaustive search provides the required pair for \(n=10\),
while Theorem~\ref{theorem: all-orders} provides such a pair for every
integer \(n\geq11\). Together, these results show that such a pair
exists for every integer \(n\geq n_{\min}\).
\end{proof}

\section{Related Discussion}
\label{section: discussion}

In this section, we discuss the two seed graphs used in 
the proof of Theorem~\ref{theorem: all-orders} (see Figure~\ref{fig:G_and_H_regular_graphs}).

\begin{remark}
\label{remark: chromatic polynomials}
{\em The chromatic polynomials of \(G\) and \(H\) are given, respectively, by
\begin{equation}
\label{eq: chromatic polynomials G, H}
\begin{aligned}
P_G(q) = q(q-1)(q-2)(q-3) \bigl( q^6-14q^5+87q^4-310q^3+676q^2-860q+496 \bigr), \\
P_H(q) = q(q-1)(q-2)(q-3) \bigl( q^6-14q^5+87q^4-310q^3+676q^2-860q+494 \bigr).
\end{aligned}
\end{equation}
Consequently,
\begin{equation}
\label{eq: diff. chromatic polynomials G, H}
P_G(q)-P_H(q) = 2q(q-1)(q-2)(q-3).
\end{equation}
Thus, \(G\) and \(H\) do not have the same chromatic polynomial.
Nevertheless, their chromatic numbers coincide. Indeed, both chromatic
polynomials vanish at \(q=1,2,3\), whereas
\[
P_G(4)=1536>0, \qquad P_H(4)=1488>0,
\]
which is consistent with the chromatic numbers in \eqref{eq:seed-chi}.
In particular, the numbers of proper \(4\)-colorings of the two graphs are
different: \(G\) admits \(1536\) proper colorings with \(4\) labeled colors,
whereas \(H\) admits \(1488\) such colorings.

A similar phenomenon occurs for the complements of the two graphs. Their
chromatic polynomials are
{\small
\begin{equation}
\label{eq: chromatic polynomials G, H complements}
\begin{aligned}
P_{\, \overline{G} \,}(q) = q(q-1)(q-2)(q-3) \bigl( q^6-19q^5+163q^4-801q^3+2357q^2-3903q+2818 \bigr), \\
P_{\, \overline{H} \,}(q) = q(q-1)(q-2)(q-3) \bigl( q^6-19q^5+163q^4-801q^3+2357q^2-3903q+2816 \bigr),
\end{aligned}
\end{equation}}
respectively. Hence, similarly to \eqref{eq: diff. chromatic polynomials G, H},
\begin{equation}
\label{eq: diff. chromatic polynomials G, H complements}
P_{\, \overline{G} \,}(q)-P_{\, \overline{H} \,}(q) = 2q(q-1)(q-2)(q-3).
\end{equation}
Although the complementary graphs \(\overline{G}\) and \(\overline{H}\) also have distinct chromatic
polynomials, both polynomials vanish at \(q=1,2,3\), while
\[
P_{\, \overline{G} \,}(4)=528>0
\qquad\text{and}\qquad
P_{\, \overline{H} \,}(4)=480>0.
\]
Therefore, consistently with \eqref{eq:seed-chi-complement}, both complementary 
graphs have chromatic number~4.
Likewise, the numbers of proper \(4\)-colorings of the two complementary graphs are
different: \(\overline{G}\) admits \(528\) proper colorings with \(4\) labeled colors,
whereas \(\overline{H}\) admits \(480\) such colorings.}
\end{remark}

\begin{remark}
{\em Neither of the $4$-regular, cospectral graphs $G$ and $H$ is edge-transitive.
Indeed, the number of triangles containing an edge $\{u,v\}$, namely
\[
    |N(u)\cap N(v)|,
\]
is invariant under graph automorphisms.  In $G$, the edge $\{a,d\}$
is contained in three triangles, since
\[
    N_G(a)\cap N_G(d)=\{b,f,h\},
\]
whereas the edge $\{a,b\}$ is contained in only one triangle, since
\[
    N_G(a)\cap N_G(b)=\{d\}.
\]
Similarly, in $H$, the edge $\{a,g\}$ is contained in no triangle,
whereas the edge $\{a,b\}$ is contained in one triangle.  Hence,
neither graph is edge-transitive.

Moreover, the ratio bound (see Theorem~9 of \cite{Lovasz1979}) is not tight 
for the Lov\'{a}sz theta function of either graph.  Since $G$ and $H$ are 
cospectral, their least adjacency eigenvalues satisfy
\[
    \lambda_{\min}(G)=\lambda_{\min}(H) = -\frac{1+\sqrt{17}}{2}.
\]
Consequently, since both graphs are $4$-regular on $10$ vertices,
the ratio bound gives
\[
    \vartheta(G),\ \vartheta(H)
    \leq \frac{-10\lambda_{\min}(G)}{4-\lambda_{\min}(G)}
    = \frac{5(\sqrt{17}-1)}{4}
    \approx 3.903882.
\]
On the other hand, by Propositions~\ref{proposition: Lovasz number of G} 
and~\ref{proposition: Lovasz number of H},
\[
    \vartheta(G)=1+\sqrt{5} \approx 3.236068,
    \qquad \vartheta(H) \approx 3.268800.
\]
Thus,
\[
    \vartheta(G),\ \vartheta(H) < \frac{5(\sqrt{17}-1)}{4},
\]
so the ratio bound is not tight for either $\vartheta(G)$ or $\vartheta(H)$.}
\end{remark}

\begin{remark}[fractional chromatic numbers and computational complexity]
{\em The pairs of graphs constructed in the proof of Theorem~\ref{theorem: all-orders} 
also have distinct fractional chromatic numbers. Specifically,
\begin{equation}\label{eq:fractional-seeds}
 \chi_f(G)=\tfrac72,\qquad \chi_f(H)=\tfrac{10}{3},
\end{equation}
and, since $G_n = G \vee K_{n-10}$ and $H_n = H \vee K_{n-10}$, and 
the fractional chromatic number is additive under the graph join,
it follows that 
\begin{equation}\label{eq:fractional-extensions}
\begin{aligned}
& \chi_f(G_n)=\chi_f(G)+(n-10)=n-\tfrac{13}{2}, \\
& \chi_f(H_n)=\chi_f(H)+(n-10)=n-\tfrac{20}{3}.
\end{aligned}
\end{equation}
Thus their fractional chromatic numbers differ by $\tfrac{1}{6}$, although their
ordinary chromatic numbers, and those of their complements, coincide.
For completeness, these values admit short exact certificates. Recall
that a fractional coloring assigns nonnegative weights to independent
sets so that the total weight of the sets containing each vertex is at
least one; $\chi_f$ is the minimum total weight. In $G$, the set
$S=\{a,b,c,d,e,g,h\}$ induces the clique expansion of $C_5$ described in
\eqref{eq:cycle-classes}, and $\alpha(G[S])=~2$. Therefore
\[
 \chi_f(G)\geq\chi_f(G[S])\geq\tfrac72.
\]
A matching fractional coloring assigns weight $1/2$ to each of
\[
 \{a,e,i\},\quad\{a,g,j\},\quad\{b,f,h\},\quad
 \{b,g,j\},\quad\{e,f,h\},
\]
and weight $1$ to $\{c,d,i\}$, with all other weights zero.
For $H$,  
\(
\chi_f(H)\geq \frac{|V(H)|}{\alpha(H)} = \frac{10}{3}, 
\)
and the bound is attained by assigning weight $\tfrac13$ to each of
\[
 \{a,c,i\},\quad\{a,e,f\},\quad\{a,f,j\},\quad
 \{b,f,g\},\quad\{c,d,g\},\quad\{c,g,h\},
\]
and weight $\tfrac23$ to each of $\{b,h,j\}$ and $\{d,e,i\}$.
Every listed set is independent in the corresponding graph, and the
weights cover every vertex with total weight at least one.
Finally, an independent set in a join lies entirely in one of its parts,
so the fractional chromatic number is additive under joins. Applying
$\chi_f(F\vee K_t)=\chi_f(F)+t$ with $t=n-10$ proves \eqref{eq:fractional-extensions}.

The separation by the Lov\'asz number remains computationally significant.
For general input graphs, computing the fractional chromatic number
exactly is NP-hard~\cite{GrotschelLovaszSchrijver1981,BacikMahajan1995},
despite its linear programming formulation, which has a variable for
each independent set. In contrast, the Lov\'asz number admits a
semidefinite programming formulation of polynomial size and can be
approximated to additive error $\varepsilon$, for $0<\varepsilon<1$,
in time polynomial in the graph order and $\log(1/\varepsilon)$
\cite{GrotschelLovaszSchrijver1981}. This is an approximation statement,
not a claim of exact symbolic computation. The uniform positive gap in
\eqref{eq:separation-final} therefore allows the two families to be
distinguished by polynomial-time approximation of their Lov\'asz numbers.
The NP-hardness statement concerns arbitrary input graphs; the explicit
certificates above make the fractional chromatic numbers of this
particular construction easy to evaluate.}
\end{remark}

\begin{remark}
{\em The approach used in the proof of Theorem~\ref{theorem: all-orders} 
begins with a pair of connected, regular, cospectral, and nonisomorphic 
graphs on 10~vertices and produces a pair of connected, irregular, and 
nonisomorphic graphs on \(n \geq 11\) vertices that are simultaneously 
cospectral with respect to the adjacency, Laplacian, signless Laplacian, 
and normalized Laplacian matrices. This construction cannot begin with 
a pair of regular graphs on fewer than \(10\)~vertices, since every
regular graph of order less than \(10\) is determined by its spectrum
\cite{vanDamHaemers2003}.}
\end{remark}

\appendix

\setcounter{section}{0}
\section{Derivation of the Unique Solution of Equation 
\texorpdfstring{\eqref{eq:H-rho-polynomial}}{(\ref*{eq:H-rho-polynomial})} 
in \texorpdfstring{$(0,\tfrac12)$}{(0, 1/2)}}
\label{app:quartic-equation}

\setcounter{equation}{0}
\renewcommand{\theequation}{\Alph{section}\arabic{equation}}
\makeatletter
\@addtoreset{equation}{section}
\makeatother

We derive the unique solution \(\rho^\ast\in(0,\tfrac12)\) of equation 
\eqref{eq:H-rho-polynomial}. Introducing the variable
\begin{equation}
\label{eq:t-rho-substitution}
\rho^\ast=\tfrac12 (t-1),
\end{equation}
the condition \(\rho^\ast\in(0,\tfrac12)\) is equivalent to \(t\in(1,2)\), 
and substitution into \eqref{eq:H-rho-polynomial} gives the equation 
\begin{equation}
\label{eq:depressed-quartic}
t^4-14t^2+16t+5=0.
\end{equation}
We seek a factorization of the form
\begin{equation}
\label{eq:quartic-factorization-general}
t^4-14t^2+16t+5
= (t^2+2at+b)(t^2-2at+c).
\end{equation}
Comparison of the coefficients on both sides of
\eqref{eq:quartic-factorization-general} yields
\begin{equation}
\label{eq:factorization-system}
b+c-4a^2=-14,
\qquad
2a(c-b)=16,
\qquad
bc=5.
\end{equation}
Let \( a=\sqrt{z} \).
It follows from the first two equations in
\eqref{eq:factorization-system} that
\[
b+c=4z-14,
\qquad
c-b=\frac{8}{\sqrt{z}},
\]
and hence
\begin{equation}
\label{eq:b-c-values}
b=2z-7-\frac{4}{\sqrt{z}}, \qquad c=2z-7+\frac{4}{\sqrt{z}}.
\end{equation}
The remaining condition \(bc=5\) becomes
\[
(2z-7)^2-\frac{16}{z}=5,
\]
which, after multiplication by \(z\), gives the resolvent cubic
\begin{equation}
\label{eq:resolvent-cubic}
z^3-7z^2+11z-4=0.
\end{equation}
To solve \eqref{eq:resolvent-cubic}, set
\begin{equation}
\label{eq:z-u-substitution}
z=u+\tfrac{7}{3}.
\end{equation}
The resulting depressed cubic is
\begin{equation}
\label{eq:depressed-cubic}
27u^3-144 u-101=0.
\end{equation}
Substituting \( u=\tfrac{8}{3} \cos\theta \)
into \eqref{eq:depressed-cubic}, and using the identity
\( \cos(3\theta)=4\cos^3\theta-3\cos\theta \), gives
\begin{equation}
\cos(3\theta)=\tfrac{101}{128}. 
\end{equation}
By \eqref{eq:z-u-substitution} and the last equation, the required solution $z$ 
of the resolvent cubic in \eqref{eq:resolvent-cubic} is consequently given in 
\eqref{eq: z}. Using \eqref{eq:b-c-values}, the quartic in
\eqref{eq:depressed-quartic} factors as
\begin{align}
t^4-14t^2+16t+5
= \left( t^2+2\sqrt{z}\,t+2z-7-\frac{4}{\sqrt{z}} \right)
\cdot \left( t^2-2\sqrt{z}\,t+2z-7+\frac{4}{\sqrt{z}} \right).
\label{eq:quartic-factorization-final}
\end{align}
The root $t \in (1,2)$ of the second quadratic factor is given by 
\begin{equation}
\label{eq:t-star}
t = \sqrt{z} - \sqrt{7-z-4z^{-1/2}},
\end{equation}
with \(z\) in \eqref{eq: z}.
Combining \eqref{eq:t-rho-substitution} and \eqref{eq:t-star} 
gives \eqref{eq: rho}.

\section{Details of the block diagonalization of
\texorpdfstring{\(\boldsymbol{M}\)}{M}}
\label{app:H-block-diagonalization}

In this appendix, we provide the details of the symmetry reduction used
in Step~4 and verify the four matrices in
\eqref{eq:H-block-pp}--\eqref{eq:H-block-mm}.
For \(\nu\in[6]\), define
\begin{align}
\boldsymbol{K}_\nu \coloneqq
\sum_{\{x,y\}\in\mathcal{E}_\nu}
\bigl( \boldsymbol{E}_{x,y}+\boldsymbol{E}_{y,x} \bigr).
\end{align}
Then, by \eqref{eq:H-dual-matrix-general},
\begin{align}
\boldsymbol{M} = \tau\boldsymbol{I}_{10}
-\boldsymbol{J}_{10} +\sum_{\nu=1}^{6}w_\nu\boldsymbol{K}_\nu.
\end{align}

\subsection{The common eigenspaces}

We first justify the orthogonal decomposition used in Step~4. Let
\(\boldsymbol{P}_\pi\) and \(\boldsymbol{P}_\sigma\) denote the
permutation matrices associated with \(\pi\) and \(\sigma\),
respectively. Since \(\pi\) and \(\sigma\) are commuting involutions,
\begin{align}
\boldsymbol{P}_\pi^2 = \boldsymbol{P}_\sigma^2 = \boldsymbol{I}_{10},
\qquad
\boldsymbol{P}_\pi\boldsymbol{P}_\sigma = \boldsymbol{P}_\sigma\boldsymbol{P}_\pi.
\end{align}
Moreover, both permutation matrices are symmetric and orthogonal.
They are therefore simultaneously orthogonally diagonalizable, and
their common eigenspaces are given in \eqref{eq: W-subspaces}.
Consequently, \eqref{eq: direct sum} holds, where the sum is orthogonal.

Since each edge class \(\mathcal{E}_\nu\) is invariant under both
\(\pi\) and \(\sigma\), one has
\begin{align}
\label{eq2: 18.09.26}
\boldsymbol{P}_\pi\boldsymbol{K}_\nu
= \boldsymbol{K}_\nu\boldsymbol{P}_\pi,
\qquad
\boldsymbol{P}_\sigma\boldsymbol{K}_\nu
= \boldsymbol{K}_\nu\boldsymbol{P}_\sigma,
\qquad \nu\in[6].
\end{align}
The matrices \(\boldsymbol{I}_{10}\) and \(\boldsymbol{J}_{10}\) also
commute with every permutation matrix. It follows from
\eqref{eq:H-dual-matrix-general} that
\begin{align}
\label{eq3: 18.09.26}
\boldsymbol{P}_\pi\boldsymbol{M}
= \boldsymbol{M}\boldsymbol{P}_\pi,
\qquad
\boldsymbol{P}_\sigma\boldsymbol{M}
= \boldsymbol{M}\boldsymbol{P}_\sigma.
\end{align}
Thus, if
\(\underline{v}\in W_{\varepsilon,\delta}\), then
\begin{align}
\label{eq4: 18.09.26}
\boldsymbol{P}_\pi(\boldsymbol{M}\underline{v})
= \boldsymbol{M}(\boldsymbol{P}_\pi\underline{v})
= \varepsilon\boldsymbol{M}\underline{v}
\end{align}
and, similarly,
\( \boldsymbol{P}_\sigma(\boldsymbol{M}\underline{v})
= \delta\boldsymbol{M}\underline{v} \).
Therefore,
\begin{align}
\label{eq5: 18.09.26}
\boldsymbol{M}\bigl(W_{\varepsilon,\delta}\bigr)
\subseteq W_{\varepsilon,\delta},
\qquad
\varepsilon,\delta\in\{+1,-1\}.
\end{align}

We next verify the bases used for these common eigenspaces. From the
definitions of \(\pi\) and \(\sigma\), and by \eqref{eq: A,D}--\eqref{eq: B}, 
direct calculation gives
\begin{equation}
\label{eq6: 18.09.26}
\pi A_{\varepsilon,\delta}
=\varepsilon A_{\varepsilon,\delta}, \qquad 
\sigma A_{\varepsilon,\delta}
=\delta A_{\varepsilon,\delta}, \qquad 
\pi D_{\varepsilon,\delta}
=\varepsilon D_{\varepsilon,\delta}, \qquad 
\sigma D_{\varepsilon,\delta}
=\delta D_{\varepsilon,\delta},
\end{equation}
and
\begin{equation}
\label{eq7: 18.09.26}
\pi B_{+,\delta}=B_{+,\delta},
\qquad
\sigma B_{+,\delta}=\delta B_{+,\delta}.
\end{equation}
Hence,
\begin{equation}
\label{eq8: 18.09.26}
A_{\varepsilon,\delta},D_{\varepsilon,\delta}
\in W_{\varepsilon,\delta},
\qquad B_{+,\delta}\in W_{+,\delta}.
\end{equation}

\noindent 
The vectors
\[
\bigl\{
A_{\varepsilon,\delta},
D_{\varepsilon,\delta}:
\varepsilon,\delta\in\{+1,-1\}
\bigr\}
\cup
\bigl\{
B_{+,+},B_{+,-}
\bigr\}
\]
form an orthonormal set of ten vectors in \(\mathbb{R}^{10}\).
They therefore form an orthonormal basis of \(\mathbb{R}^{10}\).
It follows that the ordered tuples
\begin{align}
\label{eq9: 18.09.26}
(A_{++},B_{++},D_{++}),\qquad
(A_{+-},B_{+-},D_{+-}),\qquad
(A_{-+},D_{-+}),\qquad
(A_{--},D_{--})
\end{align}
are orthonormal bases of
\(W_{++},W_{+-},W_{-+}\), and \(W_{--}\), respectively.

\subsection{Computation of the four blocks}

Due to the orthonormality of the bases above, the entries of the matrix
representing the restriction of \(\boldsymbol{M}\) to any one of the
four invariant subspaces are the corresponding inner products
\( \langle U,\boldsymbol{M}V\rangle \).
For \(\varepsilon,\delta\in\{+1,-1\}\), set
\begin{align}
\label{eq10: 18.09.26}
s_{\varepsilon,\delta} \coloneqq
\tfrac12 \, (1+\varepsilon)(1+\delta).
\end{align}
The sum of the coordinates of each of
\(A_{\varepsilon,\delta}\) and
\(D_{\varepsilon,\delta}\) is \(s_{\varepsilon,\delta}\).
Thus, \( \langle \underline{1}, 
A_{\varepsilon,\delta} \rangle = s_{\varepsilon,\delta} 
= \langle \underline{1}, D_{\varepsilon,\delta} \rangle \), and 
since \( \langle \underline{u}, \boldsymbol{J}_{10} \underline{v} \rangle = 
\langle \underline{u}, \underline{1} \rangle \, \langle \underline{1}, \underline{v} \rangle \)
for all \( \underline{u}, \underline{v} \in \mathbb{R}^{10} \), we get 
\begin{align}
\label{eq11: 18.09.26}
\langle A_{\varepsilon,\delta},
\boldsymbol{J}_{10}A_{\varepsilon,\delta}\rangle
= \langle A_{\varepsilon,\delta},
\boldsymbol{J}_{10}D_{\varepsilon,\delta}\rangle
= \langle D_{\varepsilon,\delta},
\boldsymbol{J}_{10}D_{\varepsilon,\delta}\rangle
=s_{\varepsilon,\delta}^{\,2}.
\end{align}
Since \(A_{\varepsilon,\delta}\) and
\(D_{\varepsilon,\delta}\) are unit orthogonal vectors,
\begin{align}
\label{eq12: 18.09.26}
\langle A_{\varepsilon,\delta},
\boldsymbol{I}_{10}A_{\varepsilon,\delta}\rangle = 1, \qquad 
\langle A_{\varepsilon,\delta},
\boldsymbol{I}_{10}D_{\varepsilon,\delta}\rangle = 0, \qquad 
\langle D_{\varepsilon,\delta},
\boldsymbol{I}_{10}D_{\varepsilon,\delta}\rangle = 1.
\end{align}

We now calculate the contributions from the edge classes in
\eqref{eq:H-edge-classes}. It can be verified that the only 
nonzero contributions to the three inner products above are
given by 
\begin{align}
\label{eq13: 18.09.26}
\begin{aligned}
& \langle A_{\varepsilon,\delta},
\boldsymbol{K}_3 A_{\varepsilon,\delta}\rangle
=\varepsilon\delta, \qquad 
&\langle A_{\varepsilon,\delta},
\boldsymbol{K}_2 D_{\varepsilon,\delta}\rangle
=1, \\ 
& \langle A_{\varepsilon,\delta},
\boldsymbol{K}_4 D_{\varepsilon,\delta}\rangle
=\delta, \qquad 
&\langle D_{\varepsilon,\delta},
\boldsymbol{K}_6 D_{\varepsilon,\delta}\rangle
=\delta.
\end{aligned}
\end{align}
Consequently,
\begin{align}
\langle A_{\varepsilon,\delta},
\boldsymbol{M}A_{\varepsilon,\delta}\rangle
&=
\tau-s_{\varepsilon,\delta}^{\,2}
+\varepsilon\delta w_3,
\label{eq:H-appendix-entry-AA}\\
\langle A_{\varepsilon,\delta},
\boldsymbol{M}D_{\varepsilon,\delta}\rangle
&=
w_2+\delta w_4-s_{\varepsilon,\delta}^{\,2},
\label{eq:H-appendix-entry-AD}\\
\langle D_{\varepsilon,\delta},
\boldsymbol{M}D_{\varepsilon,\delta}\rangle
&=
\tau-s_{\varepsilon,\delta}^{\,2}
+\delta w_6.
\label{eq:H-appendix-entry-DD}
\end{align}

It remains to calculate the entries involving \(B_{+,\delta}\), which
appears only in the bases of \(W_{++}\) and \(W_{+-}\). The sum of the
coordinates of \(B_{+,\delta}\) is equal to \( \frac{\sqrt{2}}{2} (1+\delta). \)
Since \(s_{+,\delta}=1+\delta\), it follows that
\begin{align}
& \langle B_{+,\delta},
\boldsymbol{J}_{10}B_{+,\delta}\rangle
=1+\delta, \\
& \langle A_{+,\delta},
\boldsymbol{J}_{10}B_{+,\delta}\rangle
=\sqrt{2}(1+\delta), \\
& \langle B_{+,\delta},
\boldsymbol{J}_{10}D_{+,\delta}\rangle
=\sqrt{2}(1+\delta),
\end{align}
where we rely on the equality  \( \tfrac12 \, (1+\delta)^2 = 1+\delta \) 
for \(\delta \in \{+1,-1\}\).
It can be verified that the only relevant contributions from the edge classes are
\begin{equation}
\langle A_{+,\delta},
\boldsymbol{K}_1B_{+,\delta}\rangle
= \sqrt{2}, \qquad 
\langle B_{+,\delta},
\boldsymbol{K}_5 D_{+,\delta}\rangle
= \sqrt{2}.
\end{equation}
Therefore,
\begin{align}
\langle B_{+,\delta},
\boldsymbol{M}B_{+,\delta}\rangle
&=
\tau-(1+\delta),
\label{eq:H-appendix-entry-BB}\\
\langle A_{+,\delta},
\boldsymbol{M}B_{+,\delta}\rangle
&=
\sqrt{2}\bigl(w_1-(1+\delta)\bigr),
\label{eq:H-appendix-entry-AB}\\
\langle B_{+,\delta},
\boldsymbol{M}D_{+,\delta}\rangle
&=
\sqrt{2}\bigl(w_5-(1+\delta)\bigr).
\label{eq:H-appendix-entry-BD}
\end{align}

We can now recover the four matrices displayed in Step~4. If
\(\varepsilon=+1\), the restriction of \(\boldsymbol{M}\) to
\(W_{+,\delta}\), with respect to the ordered basis
\( (A_{+,\delta},B_{+,\delta},D_{+,\delta}) \),
is represented by
\[
\begin{pmatrix}
\tau-(1+\delta)^2+\delta w_3
& \sqrt{2}\bigl(w_1-(1+\delta)\bigr)
& w_2+\delta w_4-(1+\delta)^2 \\
\sqrt{2}\bigl(w_1-(1+\delta)\bigr)
& \tau-(1+\delta)
& \sqrt{2}\bigl(w_5-(1+\delta)\bigr) \\
w_2+\delta w_4-(1+\delta)^2
& \sqrt{2}\bigl(w_5-(1+\delta)\bigr)
& \tau-(1+\delta)^2+\delta w_6
\end{pmatrix}.
\]
Taking \(\delta=+1\) gives
\(\boldsymbol{M}_{++}\) in \eqref{eq:H-block-pp}, whereas taking
\(\delta=-1\) gives
\(\boldsymbol{M}_{+-}\) in \eqref{eq:H-block-pm}.

If \(\varepsilon=-1\), then
\(s_{-1,\delta}=0\), and the restriction of \(\boldsymbol{M}\) to
\(W_{-,\delta}\), with respect to the ordered basis
\(
(A_{-,\delta},D_{-,\delta}),
\)
is represented by
\[
\begin{pmatrix}
\tau-\delta w_3 & w_2+\delta w_4\\
w_2+\delta w_4 & \tau+\delta w_6
\end{pmatrix}.
\]
Taking \(\delta=+1\) gives
\(\boldsymbol{M}_{-+}\) in \eqref{eq:H-block-mp}, and taking
\(\delta=-1\) gives
\(\boldsymbol{M}_{--}\) in \eqref{eq:H-block-mm}.

Finally, let \(\boldsymbol{Q}\) be the orthogonal matrix whose columns
are the basis vectors in the order
\[
A_{++},B_{++},D_{++},
A_{+-},B_{+-},D_{+-},
A_{-+},D_{-+},
A_{--},D_{--}.
\]
The invariance of the four common eigenspaces gives
\[
\boldsymbol{Q}^{\mathsf T}\boldsymbol{M}\boldsymbol{Q}
=
\boldsymbol{M}_{++}
\oplus
\boldsymbol{M}_{+-}
\oplus
\boldsymbol{M}_{-+}
\oplus
\boldsymbol{M}_{--}.
\]
Since \(\boldsymbol{Q}\) is orthogonal,
\[
\boldsymbol{M}\succeq\boldsymbol{0}
\quad\Longleftrightarrow\quad
\boldsymbol{M}_{++}\succeq\boldsymbol{0},\quad
\boldsymbol{M}_{+-}\succeq\boldsymbol{0},\quad
\boldsymbol{M}_{-+}\succeq\boldsymbol{0},\quad
\boldsymbol{M}_{--}\succeq\boldsymbol{0}.
\]
This verifies the block diagonalization and all the entries in
\eqref{eq:H-block-pp}--\eqref{eq:H-block-mm}.

\subsection*{Use of Generative-AI tools declaration} 
The author used ChatGPT~5.6 (OpenAI) to assist in developing and checking
SageMath code \cite{SageMath} used in this work. ChatGPT was also used to
provide an additional check of some analytical calculations and to assist
with language editing and stylistic refinement of the manuscript. The author
assumes full responsibility for the content of the manuscript.

\end{document}